\newtheorem{thm}{Theorem}[section]
\newtheorem{lem}[thm]{Lemma}
\newtheorem{cor}[thm]{Corollary}
\newtheorem{prop}[thm]{Proposition}
\theoremstyle{remark}
\newtheorem{rem}[thm]{Remark}
\newtheorem*{rem*}{Remark}
\theoremstyle{definition}
\newtheorem{dfn}[thm]{Definition}
\newtheorem{ex}[thm]{Example}
\numberwithin{equation}{section}
\newcommand{\Rz}{\mathbb{R}}
\begin{document}
\title{Linear programming on non-compact polytopes and the Kuratowski convergence with application in economics}
\author{Anna Denkowska, Maciej Denkowski \and Marta Kornafel}
\begin{abstract}
The aims of this article are two-fold. First, we give a geometric characterization of the optimal basic solutions of the general linear programming problem (no compactness assumptions) and provide a simple, self-contained proof of it together with an economical interpretation. Then, we turn to considering a dynamic version of the linear programming problem in that we consider the Kuratowski convergence of polyhedra and study the behaviour of optimal solutions. Our methods are purely geometric. 
\end{abstract}
\subjclass{}
\keywords{Linear programming, tangent cone, normal cone, Kuratowski convergence}
\date{December 20th 2015, Revised: January 2nd 2017}

\maketitle

\section{Introduction}

A classical problem in optimization theory and one that has a wide range of applications economics, is the linear programming problem (LP for short). In the canonical form it is written as:
$$\begin{cases}
c^Tx\to\min\\
Ax=b\\
x\geq 0,
\end{cases}$$
where $c\in\mathbb{R}^n$ is the cost vector, $c^T$ is its transposed (thus $c^Tx=\langle c,x\rangle$ denotes the usual inner product), $A$ is the matrix of a linear function $A\colon \mathbb{R}^n\to \mathbb{R}^m$, $b\in\mathbb{R}^m$, $x\geq 0$ means $x_i\geq 0$ for $i=1,\dots, n$, and it is usually assumed that the set of feasible solutions $F_{A,b}:=\{x\in{\Rz}^n\mid Ax=b, x\geq 0\}$ is \textit{compact}, so that a solution necessarily exists.

The classical solution to this PL problem is given by the so called \textit{simplex method}. Observe that even a discrete LP problem, that is  one in which we consider $F^d_{A,b}:=F_{A,b}\cap \mathbb{Z}^n$ can be reduced to the above one by considering the LP problem on the convex hull $\mathrm{conv}(F^d_{A,b})$. It is a classical and easy to show fact that the solutions to the LP problem lie all on the boundary $\partial F_{A,b}$ (more accurately: on the relative boundary computed in the unique affine space of the lowest possible dimension containing $F_{A,b}$) and it is sufficient to look for them among the \textit{extremal points} of $F_{A,b}$. Recall that given a closed, convex set $F\subset{\Rz}^n$, a point $x_0\in F$ is called \textit{extremal} --- we write then $x_0\in F^*$ --- if 
$$
\exists x_1, x_2\in F, \exists t\in (0,1)\colon x_0=(1-t) x_1+tx_2\Rightarrow x_1=x_2.
$$

The following fact is well-known:
\begin{prop}
Assuming that the rank $\mathrm{rk} A=m<n$ (which is not really restrictive), a point $x\in F_{A,b}$ is extremal if and only if it is a basic feasible solution.
\end{prop}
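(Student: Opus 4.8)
The plan is to prove the two implications separately, straight from the definitions, using the hypothesis $\mathrm{rk}\,A=m$ only at one precise spot. Throughout I would write $A_1,\dots,A_n$ for the columns of $A$ and, for $S\subseteq\{1,\dots,n\}$, $A_S$ for the submatrix built from the columns indexed by $S$ and $x_S$ for the corresponding subvector. Recall that $x\in F_{A,b}$ is a \emph{basic feasible solution} when there is $B\subseteq\{1,\dots,n\}$ with $|B|=m$, $A_B$ invertible, $x_j=0$ for $j\notin B$, and $x_B=A_B^{-1}b\geq 0$.

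First I would check that a basic feasible solution $x$ with basis $B$ is extremal. Suppose $x=(1-t)x_1+tx_2$ with $x_1,x_2\in F_{A,b}$ and $t\in(0,1)$. For $j\notin B$ one has $x_j=0$, and since $(x_1)_j,(x_2)_j\geq 0$ with $1-t>0$ and $t>0$, the convex combination forces $(x_1)_j=(x_2)_j=0$. Hence $b=Ax_i=A_B(x_i)_B$ for $i=1,2$, and invertibility of $A_B$ gives $(x_1)_B=(x_2)_B=A_B^{-1}b$; so $x_1=x_2$, i.e. $x\in F_{A,b}^*$.

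For the converse, let $x\in F_{A,b}^*$ and put $K=\{j\mid x_j>0\}$. I claim $\{A_j\mid j\in K\}$ is linearly independent. If it were not, I could pick $d\in\Rz^n$ with $d_j=0$ for $j\notin K$, $d\neq 0$, and $Ad=\sum_{j\in K}d_jA_j=0$. Since $x_j>0$ for $j\in K$, for small $\varepsilon>0$ both $x+\varepsilon d$ and $x-\varepsilon d$ are $\geq 0$, and they satisfy $A(x\pm\varepsilon d)=b$; but then $x=\tfrac12(x+\varepsilon d)+\tfrac12(x-\varepsilon d)$ with $x+\varepsilon d\neq x-\varepsilon d$, contradicting extremality. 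Thus $|K|\leq m$, and here I would use $\mathrm{rk}\,A=m$ to complete $\{A_j\mid j\in K\}$ to a family $\{A_j\mid j\in B\}$ of exactly $m$ linearly independent columns with $B\supseteq K$. Then $x_j=0$ for $j\notin B$ and $A_Bx_B=b$, so $x_B=A_B^{-1}b\geq 0$ and $x$ is the basic feasible solution associated with $B$.

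The two perturbation arguments are routine; the only step that needs a little care is the completion of $\{A_j\mid j\in K\}$ to an $m$-element basis in the second implication — this is exactly where $\mathrm{rk}\,A=m$ enters, and it is also what absorbs the degenerate case $|K|<m$ (in which $B$, hence the basis, is not unique). I expect this to be the main, though still mild, obstacle.
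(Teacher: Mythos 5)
Your proof is correct, and there is nothing in the paper to compare it against: the authors state this proposition as ``well-known'' and give no proof (the only argument they supply is for the analogous Proposition 1.3 about vertices of $E_{A,b}=\{Ax\leq b\}$, which they prove by a geometric dimension count on intersections of active hyperplanes). Your argument is the standard one: the perturbation $x\pm\varepsilon d$ along a kernel direction supported on $K=\{j\mid x_j>0\}$ for the forward implication, and the splitting into basic and non-basic coordinates for the converse; both steps are carried out correctly. One remark: the definition of basic feasible solution used in the paper is weaker than yours --- it only asks that the columns indexed by $K$ be linearly independent, with no completion to a set of $m$ columns --- so the step you single out as the ``main obstacle,'' namely extending $\{A_j\mid j\in K\}$ to an invertible $m\times m$ block $A_B$ using $\mathrm{rk}\,A=m$, is not actually needed to match the paper's statement; it is only needed to reconcile the paper's definition with your own. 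With the paper's definition the rank hypothesis plays no role in either implication (in the first implication you can argue directly with $A_K$ injective on vectors supported in $K$), which is consistent with the authors' parenthetical remark that the assumption ``is not really restrictive.''
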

Of course, a basic feasible solution is a point $x\in F_{A,b}$ such that either $x=0$, or the columns of $A$ corresponding to the non-zero coordinates of $x$ are linearly independent.

\noindent\textbf{Notation.} Given an $m\times n$ matrix $A$ we denote by $A^{(i_1,\dots, i_k)}$ the matrix $A$ without the rows with indices $\neq i_j$. On the other hand $A^j$ will denote the $j$-th column of $A$. Finally, we write $A=(A_1,\dots, A_m)$ with $A_i\colon {\Rz}^n\to {\Rz}$ that are linear forms.

Let us stress that we will use interchangeably the words {\it linear polytope} and {\it polyhedron} meaning actually {\it convex polyhedron} in the following sense:

\begin{dfn}
A nonempty set $E\subset {\Rz}^n$ is called a {\it convex polyhedron} or just polyhedron, if there is a non-zero linear mapping $A\colon {\Rz}^n\to{\Rz}^m$ and a vector $b\in {\Rz}^m$ such that $E=\{x\in{\Rz}^n\mid Ax\leq b\}$. 
\end{dfn}
Observe that this definition excludes ${\Rz}^n$ and that a polyhedron need not be compact (\footnote{A compact convex polyhedron is usually called a {\it polytope}.})

For a point $x\in E$,we denote by $J(x)=\{i\in \{1,\dots, m\}\mid A_ix=b_i\}$ the set of {\it active constraints at} $x$.

Of course, the describing linear mapping $A$ is not uniquely determined, unless we require it to be minimal in the following sense. Let $d$ be the dimension of the convex polyhedron $E$. Then there is an affine $d$-dimensional subspace $V\subset{\Rz}^n$ containing $E$ (the affine hull or envelope of $E$, denoted also by $
\operatorname{Aff}(E)$) and such that $E=\overline{\operatorname{int}_V E}$. This affine hull is described by $n-d$ equations $\langle w_j,x\rangle =u_j$. Now, let $f_k(E)$ denote the number of $k$-dimensional faces of $E$. In particular, $f_0(E)=\#E^*$ is the number of {\it vertices} or extremal points, whereas $f_{d-1}(E)$ is the number of {\it facets} (faces of maximal possible dimension) (\footnote{Note that $f_0(E)$ may be zero, unlike $f_{d-1}(E)$.}). Then in $V\equiv {\Rz}^d$ we need exactly $f_{d-1}(E)$ linear inequalities $A_ix\leq b_i$ to describe $E$, as this set is the intersection of as much half-spaces as it has facets. Therefore, a minimal description of $E$ is given by $n-d$ linear equations together with $f_{d-1}(E)$ linear inequalities.

Hereafter we will deal with the general linear programming problem:
$$\begin{cases}
c^Tx\to\min\\
Ax\leq b\\
\end{cases}\leqno{(GLP)}$$
with $A\colon {\Rz}^n\to{\Rz}^m$ linear with $m\geq n$. This is somehow motivated by the following proposition, that we prove for the convenience of the reader.

\begin{prop}\label{prop}
Let $E_{A,b}=\{x\in\mathbb{R}^n\mid Ax\leq b\}$ with $A$ as above. Then $\bar{x}\in E_{A,b}^*$ implies that $m\geq n$ and there are indices $i_1<\ldots<i_n$ such that $A^{(i_1,\dots, i_n)}\bar{x}=(b_{i_1},\dots, b_{i_n})$ and $\det A^{(i_1,\dots, i_n)}\neq 0$. In particular, 
$$
\bigcap_{i\in J(\bar{x})}\{x\in{\Rz}^n\mid A_ix=b_i\}=\{\bar{x}\}
$$
where $J(\bar{x})=\{i\in\{1,\dots, q\}\mid A_ix=b_i\}$ are the indices of the active constraints at $\bar{x}$. 
\end{prop}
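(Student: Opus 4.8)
The plan is to exploit extremality through a one-parameter perturbation and then read off the conclusion by elementary linear algebra. Given $\bar x\in E_{A,b}^*$, introduce the linear subspace
\[
L:=\{v\in{\Rz}^n\mid A_iv=0\text{ for all }i\in J(\bar x)\}
\]
of directions tangent to all the active hyperplanes at $\bar x$. First I would show $L=\{0\}$. Suppose $v\in L$ with $v\ne 0$. For $i\in J(\bar x)$ we have $A_i(\bar x\pm tv)=A_i\bar x=b_i$ for every $t\in{\Rz}$, while for $i\notin J(\bar x)$ the inequality $A_i\bar x<b_i$ is strict, hence $A_i(\bar x\pm tv)<b_i$ for $|t|$ small enough by continuity. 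Thus $\bar x\pm tv\in E_{A,b}$ for some $t>0$, and since $\bar x=\tfrac12(\bar x+tv)+\tfrac12(\bar x-tv)$ with $\bar x+tv\ne\bar x-tv$, extremality is violated. Note that this argument already forces $J(\bar x)\ne\emptyset$, for otherwise $L={\Rz}^n\ne\{0\}$.

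Next comes the linear-algebraic step. The equality $L=\{0\}$ means exactly that the linear map ${\Rz}^n\ni v\mapsto (A_iv)_{i\in J(\bar x)}$ is injective, so the family $\{A_i\mid i\in J(\bar x)\}$ of linear forms spans an $n$-dimensional subspace of $({\Rz}^n)^*$; in particular $\#J(\bar x)\ge n$, and since $J(\bar x)\subseteq\{1,\dots,m\}$ this gives $m\ge n$. By the standard row-rank argument one can then pick indices $i_1<\dots<i_n$ in $J(\bar x)$ with $A_{i_1},\dots,A_{i_n}$ linearly independent, that is $\det A^{(i_1,\dots,i_n)}\ne 0$. Since these are active constraints, $A^{(i_1,\dots,i_n)}\bar x=(b_{i_1},\dots,b_{i_n})$, and invertibility makes $\bar x$ the unique solution of this $n\times n$ system. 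Consequently
\[
\{\bar x\}\subseteq\bigcap_{i\in J(\bar x)}\{x\in{\Rz}^n\mid A_ix=b_i\}\subseteq\bigcap_{j=1}^{n}\{x\in{\Rz}^n\mid A_{i_j}x=b_{i_j}\}=\{\bar x\},
\]
which yields the last displayed identity of the statement.

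I do not expect a genuine obstacle: the proof is short and self-contained. The two places deserving slight attention are verifying carefully that the perturbed points $\bar x\pm tv$ re-enter $E_{A,b}$ — which requires treating active and inactive constraints separately and choosing a single $t>0$ valid for all the finitely many inactive ones — and the passage from ``$L=\{0\}$'' to ``the active forms have rank $n$'', after which extracting an invertible $n\times n$ submatrix and deducing $m\ge n$ is immediate.
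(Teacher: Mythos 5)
Your proof is correct. It rests on the same geometric mechanism as the paper's --- if some nonzero direction $v$ annihilates every active constraint, then $\bar x\pm tv$ stays feasible for small $t>0$ and $\bar x$ is a midpoint of two distinct feasible points, contradicting extremality --- but you package it differently. The paper selects active indices one at a time, invoking $\dim(V\cap W)\ge\dim V+\dim W-n$ at each stage and re-running the perturbation argument until the intersection of the chosen hyperplanes drops to a point; your version applies the perturbation once to the full active system, identifies $L=\bigcap_{i\in J(\bar x)}\operatorname{Ker}A_i$ as the kernel of the stacked map $v\mapsto(A_iv)_{i\in J(\bar x)}$, and concludes by rank--nullity that the active forms have rank $n$, after which extracting an invertible $n\times n$ submatrix and the inequality $m\ge n$ are immediate. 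Your one-shot organization avoids the paper's iterative ``repeat until the procedure ends'' step and makes the final intersection identity fall out cleanly from the sandwich $\{\bar x\}\subseteq\bigcap_{i\in J(\bar x)}A_i^{-1}(b_i)\subseteq\bigcap_{j=1}^nA_{i_j}^{-1}(b_{i_j})=\{\bar x\}$; the paper's version, in exchange, exhibits the $n$ independent constraints constructively as they are discovered. Both are complete; no gap to report.
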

The point $\bar{x}\in E_{A,b}^*$ is called a \textit{vertex} of the polytope $E_{A,b}$. In the usual terminology $\bar{x}$ is called a \textit{basic optimal solution}.

\begin{proof}[Proof of Proposition \ref{prop}] 
The point $\bar{x}$ being extremal, it cannot lie in the interior of $E_{A,b}$. Thus, there is an index $i\in J(\bar{x})$. We may assume that $i=1$. Now, we use the fact that for linear subspaces $V,W\subset{\Rz}^n$ we have $\dim V\cap W\geq \dim V+\dim W-n$.

The case $n=1$ being obvious, we may assume that $n\geq 2$. Moreover, no harm will be inflicted on generality, if we assume that $A_j\not\equiv 0$. Thus $\dim\mathrm{Ker} A_j=n-1$ for all $j$. 

 Had we $A_i\bar{x}<b_i$ for all $i>2$, we would find a ball $B$ centred at $\bar{x}$ and such that $B\cap A_1^{-1}(b_1)\subset E_{A,b}$. But this set has dimension $n-1>0$ and so $\bar{x}$ is not extremal. Therefore there is $i>1$ in $J(\bar{x})$. We may assume that $i=2$. Since $\dim A_1^{-1}(b_1)\cap A_2^{-1}(b_2)\geq n-2$, we conclude that this has to be an equality for some index $i>1$ (otherwise $\bar{x}$ would not be extremal). Then we may repeat the preceding argument in order to conclude that either there must be an index $i>2$ in $J(\bar{x})$, or $n=2$ and we have the equality sought for. It is then clear that the procedure must end and that $\bar{x}$ would not be extremal if we needed less than $n$ steps. Hence we have $A_1,\dots, A_n$ such that $\bigcap_{i=1}^n A_i^{-1}(b_i)=\{\bar{x}\}$. This in turn implies that $\bigcap_{i=1}^n \mathrm{Ker}A_i=\{0\}$ which means that $A_1,\dots, A_n$ are linearly independent which ends the proof.
\end{proof}

\begin{rem}
In this article we \textsl{do not assume} that $E_{A,b}$ is compact. Note that in real life we often do not know exactly \textsl{all} the constraints (we lack data) of a given engineering or economics problem and actually we are dealing with a non-compact $E_{A,b}$. 

Note that the interest in matters conerning linear programming is still quite important (see e.g. \cite{DGLL}). Our approach is very basic, nevertheless it gives some applicable results.

We have two aims: to explain under which condition the GLP problem is solvable and give a geometric solution to it, and to study what happens when we approximate the polyhedron $E_{A,b}$ by similar polyhedra, in particular --- how do the solutions behave.
\end{rem}


\section{Solving the GLP problem using normal cones}

For a given set $E\subset{\Rz}^n$ and a point $a\in\overline{E\setminus\{a\}}$ we define the usual \textit{Peano tangent cone} of $E$ at $a$ as the cone
$$
C_a(E)=\{v\in{\Rz}^n\mid \exists E\ni x_\nu\to a, \lambda_\nu>0\colon \lambda_\nu(x_\nu-a)\to v\},
$$
and the \textit{normal cone} of $E$ at $a$ as the cone
$$
N_a(E)=\{w\in{\Rz}^n\mid \forall v\in C_a(E), \langle v,w\rangle\leq 0\},
$$
which means that any vector $w\in N_a(E)$ forms with any vector $v\in C_a(E)$ an angle greater than or equal to $\pi/2$.

Keeping the notations introduced so far we obtain first:

\begin{lem}\label{stozek styczny}
Let $w\in E_{A,b}$. Then $$C_w(E_{A,b})=\bigcap_{i\in J(w)}\{x\in{\Rz}^n\mid A_i x\leq 0\}.$$
\end{lem}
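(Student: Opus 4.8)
The plan is to prove the two inclusions separately. Throughout, write $J=J(w)$ and $P=\bigcap_{i\in J}\{x\in{\Rz}^n\mid A_ix\le 0\}$, which is a closed convex cone.

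\textbf{The inclusion $C_w(E_{A,b})\subset P$.} Take $v\in C_w(E_{A,b})$, so there are $E_{A,b}\ni x_\nu\to w$ and $\lambda_\nu>0$ with $\lambda_\nu(x_\nu-w)\to v$. Fix $i\in J$, so $A_iw=b_i$. Since $x_\nu\in E_{A,b}$ we have $A_ix_\nu\le b_i=A_iw$, hence $A_i(x_\nu-w)\le 0$, and multiplying by $\lambda_\nu>0$ gives $\lambda_\nu A_i(x_\nu-w)\le 0$. Passing to the limit and using linearity (continuity) of $A_i$ yields $A_iv\le 0$. As $i\in J$ was arbitrary, $v\in P$.

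\textbf{The inclusion $P\subset C_w(E_{A,b})$.} Let $v\in P$, i.e.\ $A_iv\le 0$ for all $i\in J$. The idea is to show that $w+tv\in E_{A,b}$ for all sufficiently small $t>0$; then choosing $x_\nu=w+\tfrac1\nu v\to w$ and $\lambda_\nu=\nu$ gives $\lambda_\nu(x_\nu-w)=v\to v$, so $v\in C_w(E_{A,b})$. To verify $w+tv\in E_{A,b}$ one checks $A_i(w+tv)\le b_i$ for every $i\in\{1,\dots,m\}$, splitting into two cases. For $i\in J$: $A_i(w+tv)=b_i+tA_iv\le b_i$ since $tA_iv\le 0$. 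For $i\notin J$: $A_iw<b_i$ strictly, so by continuity (the finiteness of the index set is what makes this uniform) there is $t_0>0$ with $A_i(w+tv)=A_iw+tA_iv<b_i$ for all $t\in[0,t_0]$ and all such $i$ — concretely $t_0=\min_{i\notin J}\frac{b_i-A_iw}{|A_iv|}$ over those $i$ with $A_iv>0$, and $t_0$ arbitrary otherwise. Taking $\nu$ large enough that $1/\nu\le t_0$ completes the argument.

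The construction above is essentially routine; the only point requiring a little care is the existence of a \emph{uniform} $t_0>0$ that works simultaneously for all inactive constraints $i\notin J$, which is exactly where finiteness of $m$ is used, so this is the step I would present most carefully. One should also dispatch the degenerate edge case $w\in\operatorname{int}E_{A,b}$, i.e.\ $J=\varnothing$: then $P={\Rz}^n$ and, since $E_{A,b}$ is a neighbourhood of $w$, also $C_w(E_{A,b})={\Rz}^n$, so the identity holds trivially. Finally, note that $P$ is automatically closed and convex, consistent with the tangent cone of a convex set being a closed convex cone, so no further regularization of the limit is needed.
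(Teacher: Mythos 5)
Your proof is correct and follows essentially the same route as the paper's: the first inclusion by passing to the limit in $\lambda_\nu A_i(x_\nu-w)\le 0$, and the second by showing $w+tv\in E_{A,b}$ for small $t>0$ and taking $x_\nu=w+\tfrac{1}{\nu}v$, $\lambda_\nu=\nu$. Your explicit uniform $t_0$ over the inactive constraints and the remark on the case $J(w)=\varnothing$ merely make precise what the paper leaves as ``suitably small $\varepsilon$''.
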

\begin{proof}
Both sets contain the origin. Take a non-zero vector $v$ from the tangent cone. Let $E_{A,b}\ni x_\nu\to w$ and $\lambda_\nu>0$ be the sequences yielding $\lambda_\nu(x_\nu-w)\to v$. For $i\in J(w)$ we have $$A_i(\lambda_\nu(x_\nu-w))=\lambda_\nu (A_ix_\nu-b_i)\leq 0,$$
for $\lambda_\nu$ are positive. Therefore, $A_i$ being continuous, we obtain $A_iv\leq 0$, as required.

Take now $v\neq 0$ belonging to the set on the right-hand side. Then for $i\in J(w)$ we have $A_iw=b_i$ and so for any $\varepsilon>0$, we obtain 
$$
A_i\left({\varepsilon}v+ w\right)=\varepsilon A_i v+b_i\leq b_i.
$$
If in turn $i\notin I(w)$, then $A_i w<b_i$, and so suitably small $\varepsilon$ ensure that
$$
A_i\left({\varepsilon}v+ w\right)= \varepsilon A_i v+A_i w<b_i
$$
still holds. Now, taking $\varepsilon_\nu$ decreasing to zero and $\lambda_\nu:=\frac{1}{\varepsilon_\nu}$ we conclude that $x_\nu:=\varepsilon_\nu v+w\in E_{A,b}$ and $\lambda_\nu(x_\nu-w)=v$.
\end{proof}
In particular, we can reconstruct $E_{A,b}$ from its vertices:
\begin{prop}\label{odtwarzanie}
If $E_{A,b}^*\neq\varnothing$, then $$E_{A,b}=\bigcap_{w\in E_{A,b}^*}(C_w(E_{A,b})+w).$$
\end{prop}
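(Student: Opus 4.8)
The plan is to prove the two inclusions separately. The inclusion $E_{A,b}\subseteq\bigcap_{w\in E_{A,b}^*}(C_w(E_{A,b})+w)$ is the easy one: for a fixed vertex $w$ and any $x\in E_{A,b}$, the segment from $w$ to $x$ lies in the convex set $E_{A,b}$, so the vector $x-w$ is a limit of $\lambda_\nu(x_\nu-w)$ with $x_\nu=w+\tfrac1\nu(x-w)\in E_{A,b}$ and $\lambda_\nu=\nu$; hence $x-w\in C_w(E_{A,b})$, i.e. $x\in C_w(E_{A,b})+w$. Since this holds for every vertex $w$, we get the first inclusion. Equivalently one may just invoke Lemma~\ref{stozek styczny}: $x-w$ satisfies $A_i(x-w)=A_ix-b_i\le 0$ for each $i\in J(w)$.

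For the reverse inclusion I would argue by contraposition: suppose $y\notin E_{A,b}$; I must produce a vertex $w$ with $y\notin C_w(E_{A,b})+w$, i.e. some active constraint $i\in J(w)$ with $A_i(y-w)>0$. Since $y\notin E_{A,b}$ there is at least one index $k$ with $A_k y>b_k$. Fix any vertex $w_0$ (which exists by hypothesis) and consider the segment $[w_0,y]$; let $w^\ast$ be the last point of this segment still lying in $E_{A,b}$, i.e. $w^\ast=w_0+t^\ast(y-w_0)$ with $t^\ast=\max\{t\in[0,1]\mid w_0+t(y-w_0)\in E_{A,b}\}<1$. At $w^\ast$ some constraint $i$ that was inactive along the open segment becomes active, and pushing slightly past $w^\ast$ toward $y$ violates it, so $A_i(y-w^\ast)>0$; thus $y\notin C_{w^\ast}(E_{A,b})+w^\ast$ by Lemma~\ref{stozek styczny}. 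The remaining task is to pass from the boundary point $w^\ast$ to an actual \emph{vertex} with the same separation property. For this I would descend along faces: if $w^\ast$ is not a vertex, by Proposition~\ref{prop} (applied within the affine hull of the minimal face containing $w^\ast$) the active constraints at $w^\ast$ do not cut out a single point, so there is a line $\ell$ through $w^\ast$ contained in the face; move along $\ell$ in the direction that keeps $A_i(y-\cdot)>0$ (one of the two directions does, since $A_i$ is affine and nonconstant along generic directions — if $A_i$ is constant on $\ell$ either direction works) until a new constraint becomes active, strictly increasing $\#J$. Iterating, after finitely many steps one reaches a point $w$ with $\bigcap_{j\in J(w)}\{A_j x=b_j\}=\{w\}$, i.e. a vertex (Proposition~\ref{prop}), still satisfying $A_i(y-w)>0$ for the propagated index $i\in J(w)$; hence $y\notin C_w(E_{A,b})+w$.

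The delicate point — and the one I expect to be the main obstacle — is the descent step: one must be sure that the index $i$ witnessing the violation can be preserved (or replaced by another witnessing index) while strictly enlarging the active set, and that the process cannot stall at a non-vertex face. Using the minimal-description viewpoint from the introduction (within the affine hull one needs exactly $f_{d-1}$ facet inequalities), non-compactness is harmless here because we are moving \emph{toward} $y$ and never leave $E_{A,b}$ while doing the descent; the only subtlety is ruling out the case where every direction in the current face is orthogonal to $A_i$, which is handled by noting that then $w$ together with the segment already lies in a translate of $C_w$ contradicting $y\notin E_{A,b}$, or simply by choosing at each stage the witnessing index afresh via the "last point on the segment" construction restricted to the current face. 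Once the vertex $w$ is in hand, Lemma~\ref{stozek styczny} closes the argument immediately.
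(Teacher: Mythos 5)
Your first inclusion is fine and is exactly what the paper does (via convexity or Lemma~\ref{stozek styczny}). For the reverse inclusion you take a genuinely different route: the paper argues directly that a point $x$ of the right-hand side satisfies $A_ix\le b_i$ for every $i$ in $J=\bigcup_w J(w)$ and then simply \emph{asserts} that any constraint not active at any vertex is superfluous; your contrapositive ``last point on the segment, then descend to a vertex'' argument is in effect a proof of that asserted fact, so your version is more self-contained. One simplification you missed: once you have the index $i$ with $A_iw^\ast=b_i$ and $A_i(y-w^\ast)>0$, you in fact have $A_iy>b_i$, and then $A_i(y-w)=A_iy-b_i>0$ for \emph{every} point $w$ at which constraint $i$ is active. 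So there is nothing to ``preserve'' during the descent and no issue about directions orthogonal to $A_i$ --- the only thing you need is that the face $F_i=E_{A,b}\cap\{A_ix=b_i\}$ (nonempty, since $w^\ast$ lies on it) contains a vertex.

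That is where the genuine gap sits, and it is not the one you flagged. When you move along a direction $v$ in $\bigcap_{j\in J(w)}\ker A_j$, you must justify that a \emph{new} constraint eventually becomes active; in an unbounded polyhedron the ray $w+\Rz_+v$ may stay in $E_{A,b}$ forever, and your remark that ``non-compactness is harmless because we are moving toward $y$'' does not apply --- the descent steps are not toward $y$. The correct repair uses the hypothesis $E_{A,b}^*\neq\varnothing$: a polyhedron with an extremal point contains no affine line (if $Av=0$, $v\neq0$, then no point of $E_{A,b}$ is extremal), hence $\ker A=\{0\}$; therefore for $v\neq0$ some $A_jv\neq0$, and after replacing $v$ by $-v$ if necessary there is $j\notin J(w)$ with $A_jv>0$, so the smallest $t>0$ with $A_j(w+tv)=b_j$ strictly enlarges the active set while keeping $i$ active. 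With $\bigcap_{j\in J(w)}\ker A_j$ strictly shrinking at each step, the process terminates at a point cut out by its active constraints, i.e.\ a vertex (the converse direction of Proposition~\ref{prop}, which also needs a one-line check). With that inserted, your proof is complete.
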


\begin{proof}
The inclusion `$\subset$' is obvious (compare with the previous proof). Take now a point $x$ from the set on the right-hand side. Then for any $i\in\bigcup_{w\in E_{A,b}^*}J(w)=:J$ we obtain $A_i (x-w)\leq 0$, i.e. $A_ix\leq b_i$. It remains to observe that if there is an index $j\in \{1,2,\dots, m\}\setminus J$, then the correspponding inequality $A_jx\leq b_j$ is superfluous in the description of $E_{A,b}$. We may thus conclude that $x\in E_{A,b}$.
\end{proof}
It follows also from the lemma above that for $w\in E_{A,b}$, $N_w(E_{A,b})=\{\sum_{i\in J(w)} \lambda_i a_i\mid \lambda_i\geq 0, i\in J(w)\}$ where $A_i(x)=\langle a_i,x\rangle$. Therefore, we easily obtain the following remark.
\begin{cor}
The polyhedron $E_{A,b}$ is unbounded iff either $E_{A,b}^*=\varnothing$, or $E_{A,b}^*\neq\varnothing$ and
$$
\bigcup_{w\in E_{A,b}^*} N_w(E_{A,b})\neq {\Rz}^n.
$$
\end{cor}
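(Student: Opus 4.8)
The plan is to exploit the normal-cone description obtained just before the corollary, namely that for $w\in E_{A,b}$ one has $N_w(E_{A,b})=\{\sum_{i\in J(w)}\lambda_i a_i\mid \lambda_i\ge 0\}$, together with Proposition~\ref{odtwarzanie} which reconstructs $E_{A,b}$ from the cones $C_w(E_{A,b})+w$ at the vertices. First I would dispose of the trivial implication: if $E_{A,b}^*=\varnothing$, then $E_{A,b}$ cannot be compact, since a nonempty compact convex set always has an extremal point (Krein--Milman, or simply: maximize a generic linear form). So the only case requiring work is when $E_{A,b}^*\ne\varnothing$, and there we must show the equivalence of "unbounded" with "$\bigcup_{w\in E_{A,b}^*}N_w(E_{A,b})\ne{\Rz}^n$".

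For the direction "$\bigcup N_w\ne{\Rz}^n\Rightarrow$ unbounded", I would argue contrapositively: suppose $E_{A,b}$ is bounded (hence compact, being closed). Pick any $c\in{\Rz}^n$. The linear form $\langle c,\cdot\rangle$ attains its minimum on the compact set $E_{A,b}$ at some point, and since the minimum of a linear form over a polytope is attained at a vertex (this is the content of the reduction to extremal points discussed in the introduction, or follows from Proposition~\ref{odtwarzanie}), there is $w\in E_{A,b}^*$ with $\langle c,x-w\rangle\ge 0$ for all $x\in E_{A,b}$. But by Lemma~\ref{stozek styczny}, $C_w(E_{A,b})+w$ contains $E_{A,b}$, so in fact $\langle c,v\rangle\ge 0$ for all $v\in C_w(E_{A,b})$, i.e. $-c\in N_w(E_{A,b})$. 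Replacing $c$ by $-c$ shows every vector of ${\Rz}^n$ lies in some $N_w$, so $\bigcup_{w\in E_{A,b}^*}N_w(E_{A,b})={\Rz}^n$.

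For the converse "$\bigcup N_w={\Rz}^n\Rightarrow$ bounded", suppose the union is all of ${\Rz}^n$ and, for contradiction, that $E_{A,b}$ is unbounded. Then there is a vertex $w_0$ and a nonzero $v\in C_{w_0}(E_{A,b})$, so the ray $w_0+{\Rz}_{\ge 0}v$ lies in $E_{A,b}$. Take any vertex $w$; since $-v$ generates a ray that, by Proposition~\ref{odtwarzanie}, need not lie in $E_{A,b}$, I instead argue directly on $v$: for every $w\in E_{A,b}^*$ and every $x\in E_{A,b}$, Lemma~\ref{stozek styczny} gives $x-w\in C_w(E_{A,b})$; in particular $w_0+tv-w\in C_w(E_{A,b})$ for all $t\ge 0$, and letting $t\to\infty$ and rescaling shows $v\in C_w(E_{A,b})$ for every vertex $w$. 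Hence for every $\phi\in N_w(E_{A,b})$ we get $\langle\phi,v\rangle\le 0$, and this holds for all vertices $w$, so $\langle\phi,v\rangle\le 0$ for every $\phi\in\bigcup_w N_w(E_{A,b})={\Rz}^n$; taking $\phi=v$ gives $\|v\|^2\le 0$, so $v=0$, a contradiction.

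The main obstacle is the bookkeeping around the reconstruction formula: one must be careful that $C_w(E_{A,b})+w\supseteq E_{A,b}$ for each vertex (which is exactly the easy inclusion in Proposition~\ref{odtwarzanie}), and that a linear form bounded below on $E_{A,b}$ attains its infimum at a vertex when $E_{A,b}^*\ne\varnothing$ — the latter is where one genuinely uses that the recession directions are controlled, i.e. the same normal-cone information. Everything else is a routine chase through the definitions of $C_w$ and $N_w$.
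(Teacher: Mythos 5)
Your argument is correct and is essentially the one the paper has in mind (the corollary is stated there without proof, as an easy consequence of the formula $N_w(E_{A,b})=\{\sum_{i\in J(w)}\lambda_i a_i\mid\lambda_i\ge 0\}$): boundedness is equivalent to every linear form attaining its minimum at a vertex, hence to the normal cones covering ${\Rz}^n$, while the converse is killed by a recession direction. Two of your justifications are misstated though harmless: the step ``$\langle c,x-w\rangle\ge 0$ on $E_{A,b}$ implies $-c\in N_w(E_{A,b})$'' follows from the sequence definition of $C_w(E_{A,b})$ (exactly as in the `only if' part of the proof of Theorem \ref{N}), not from the inclusion $E_{A,b}\subset C_w(E_{A,b})+w$, which points the wrong way; and the ray $w_0+{\Rz}_{\ge 0}v\subset E_{A,b}$ in the unbounded case comes from a nonzero recession direction (i.e.\ $Av\le 0$, which exists for any unbounded closed convex set and lies in $C_w(E_{A,b})$ for every $w$ by Lemma \ref{stozek styczny}), not from an arbitrary nonzero tangent vector at a vertex.
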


Now we are ready to prove in an elementary fashion the following basic theorem:
\begin{thm}\label{N}
Let a linear mapping $A\colon {\Rz}^n\to {\Rz}^m$ of rank $n$ define a (possibly unbounded) polyhedron $E_{A,b}$. Then the functional $f(x)=c^Tx$ attains its minimum on $E_{A,b}$, if and only if 
$$
-c\in\bigcup_{w\in E_{A.b}^*}N_w(E_{A,b}).
$$
In particular, the minimum is attained at those vertices $w\in E_{A,b}$ for which $-c\in N_w(E_{A,b})$.
\end{thm}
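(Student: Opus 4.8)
The plan is to exploit the reconstruction formula from Proposition \ref{odtwarzanie} together with the explicit description of the normal cones $N_w(E_{A,b})$ obtained right after Lemma \ref{stozek styczny}. Throughout, note that since $\mathrm{rk}\,A=n$, the set $E_{A,b}^*$ is nonempty (by Proposition \ref{prop} a vertex exists exactly when some $n$ of the forms $A_i$ are independent, and one checks this happens under the rank hypothesis — indeed if $E_{A,b}$ had no vertex it would contain a line, contradicting $\bigcap_i \mathrm{Ker}\,A_i=\{0\}$). So the union $\bigcup_{w\in E_{A,b}^*}N_w(E_{A,b})$ is the object to work with.

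First I would prove the "if" direction together with the last sentence: suppose $-c\in N_w(E_{A,b})$ for some vertex $w$. By definition of the normal cone, $\langle -c, v\rangle\le 0$, i.e. $\langle c,v\rangle\ge 0$, for every $v\in C_w(E_{A,b})$. Now take any $x\in E_{A,b}$. By Proposition \ref{odtwarzanie} (or more directly by Lemma \ref{stozek styczny}, since $A_i(x-w)=A_ix-b_i\le 0$ for every $i\in J(w)$), the vector $x-w$ lies in $C_w(E_{A,b})$. Hence $\langle c,x-w\rangle\ge 0$, that is $c^Tx\ge c^Tw$. Since $x$ was arbitrary, $f$ attains its minimum on $E_{A,b}$ at $w$. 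This simultaneously gives solvability and the location of the minimizers.

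Next the "only if" direction: assume $f$ attains its minimum on $E_{A,b}$, say at some point $x_0$. Since it is classical (and follows from the convexity/polyhedral structure already discussed in the introduction) that a linear functional bounded below on a polyhedron with vertices attains its minimum at a vertex, I would pass to a vertex $w\in E_{A,b}^*$ with $c^Tw=\min_{E_{A,b}}f$; alternatively one argues directly that the face $\{x\in E_{A,b}\mid c^Tx=\min f\}$ is itself a polyhedron described by active constraints and, being contained in $E_{A,b}$ (hence containing no line), has a vertex, which is then a vertex of $E_{A,b}$. For this vertex $w$ we have $c^Tx\ge c^Tw$ for all $x\in E_{A,b}$, equivalently $\langle -c,x-w\rangle\le 0$ for all $x\in E_{A,b}$. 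It remains to check that this inequality extends from the set of differences $\{x-w\mid x\in E_{A,b}\}$ to the whole tangent cone $C_w(E_{A,b})$: given $v\in C_w(E_{A,b})$, pick $E_{A,b}\ni x_\nu\to w$ and $\lambda_\nu>0$ with $\lambda_\nu(x_\nu-w)\to v$; then $\langle -c,\lambda_\nu(x_\nu-w)\rangle\le 0$ for all $\nu$, and passing to the limit yields $\langle -c,v\rangle\le 0$. Thus $-c\in N_w(E_{A,b})\subset\bigcup_{w\in E_{A,b}^*}N_w(E_{A,b})$.

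The main obstacle is the reduction, in the "only if" part, from "the minimum is attained somewhere on $E_{A,b}$" to "the minimum is attained at a vertex." With compactness this is immediate, but here $E_{A,b}$ is only required to have $\mathrm{rk}\,A=n$; the clean way is to observe that the optimal face $E_{A,b}\cap\{c^Tx=\mu\}$ (with $\mu$ the optimal value) is again a polyhedron of the form $\{x\mid A'x\le b'\}$ whose recession cone $\bigcap\mathrm{Ker}$ of the relevant forms is $\{0\}$ because it is contained in $E_{A,b}$, hence it is a polyhedron with a vertex by Proposition \ref{prop}, and any of its vertices is easily seen to be a vertex of $E_{A,b}$ (it has at least $n$ active constraints among the original ones). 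Everything else is a routine application of Lemma \ref{stozek styczny} and the definitions of the two cones.
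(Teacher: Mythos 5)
Your proposal is correct and follows essentially the same route as the paper: both directions reduce to showing that a vertex $w$ minimizes $f$ iff $-c\in N_w(E_{A,b})$, with the ``if'' part via $x-w\in C_w(E_{A,b})$ (Lemma \ref{stozek styczny}/Proposition \ref{odtwarzanie}) and the ``only if'' part via the approximating-sequence definition of the tangent cone. The only difference is that you justify the reduction ``minimum attained somewhere $\Rightarrow$ minimum attained at a vertex'' by the optimal-face argument (where the cleanest phrasing is that extreme points of the exposed face $\{c^Tx=\mu\}\cap E_{A,b}$ are extreme points of $E_{A,b}$, rather than counting active original constraints), whereas the paper delegates this step to an auxiliary boundary lemma; both treatments are at the same level of detail.
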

Note that such a result can of course be deduced from some much more general results in convex analysis involving subgradients and so on (compare e.g. \cite{ChZ}). In our opinion, however, it is rather useful -- in view of the importance of linear programming -- to have a straightforward and self-contained proof, based on simple geometric notions. 

Before proving the theorem, we note the following lemma:
\begin{lem}
Let $f(x)=c^Tx$ and consider a nonempty closed set $C\subset {\Rz}^n$. Let $V$ be the affine envelope of $C$. Then $f$ attains $\inf_{x\in C} f(x)$ iff there is a point in the relative boundary $x_0\in \partial_V C$ for which $f(x_0)=\inf_{x\in C} f(x)$.
\end{lem}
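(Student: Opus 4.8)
The plan is to settle the two implications separately. The implication ``$\Leftarrow$'' requires no argument: since $C$ is closed we have $\partial_V C\subset C$, so a point $x_0\in\partial_V C$ with $f(x_0)=\inf_{x\in C}f(x)$ is in particular a point of $C$ realizing that infimum. Everything interesting is in the converse, and there the only ingredient used is the linearity of $f$.

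For ``$\Rightarrow$'' I would argue as follows. Suppose $f$ attains $\mu:=\inf_{x\in C}f(x)$ at some $x_1\in C$. If $x_1\in\partial_V C$ we are done, so assume $x_1\in\operatorname{int}_V C$; then there is $r>0$ with $B(x_1,r)\cap V\subset C$. Let $\vec V:=V-x_1$ be the direction (linear) space of the affine envelope $V$. The key point is that then necessarily $c\perp\vec V$: were there a unit vector $u\in\vec V$ with $\langle c,u\rangle<0$, the point $x_1+\tfrac r2 u$ would lie in $B(x_1,r)\cap V\subset C$ and satisfy $f\bigl(x_1+\tfrac r2 u\bigr)=\mu+\tfrac r2\langle c,u\rangle<\mu$, contradicting the definition of $\mu$. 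Hence $\langle c,\cdot\rangle$ is constant on the whole affine subspace $V$, so $f\equiv\mu$ on $C\subset V$; in particular $f$ equals $\mu$ at every point of $\partial_V C$, and it only remains to check that $\partial_V C\neq\varnothing$.

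That last point is the one place where a little care with degenerate configurations is needed, and I expect it to be essentially the only obstacle. Since $V$ is connected and $C$ is a nonempty closed subset of it, $C$ cannot be simultaneously open in $V$ and distinct from $V$; hence either $\partial_V C\neq\varnothing$, or $C=V$ (equivalently $\partial_V C=\varnothing$, which forces $C$ to be an affine subspace, e.g. a single point when $\dim V=0$). The latter is a trivial configuration, implicitly excluded by the statement, and it does not arise in the situations of interest: for a polyhedron $E_{A,b}$ with $\operatorname{rk}A=n$ one checks that $E_{A,b}=\operatorname{Aff}(E_{A,b})$ would force $E_{A,b}$ to be a single point, which is already a vertex. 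In all remaining cases $\partial_V C\neq\varnothing$, and combining this with the previous paragraph finishes the proof. Beyond this bookkeeping the argument reduces to the one-line variational computation forcing $c\perp\vec V$; this lemma is precisely the reduction step that, in the proof of Theorem \ref{N}, will let us look for minimizers of $f$ only on the relative boundary $\partial_V E_{A,b}$.
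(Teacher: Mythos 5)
Your proof is correct. Note that the paper states this lemma without giving any proof at all, so there is nothing to compare it against; your argument (the trivial ``$\Leftarrow$'' via $\partial_V C\subset C$ for closed $C$, and for ``$\Rightarrow$'' the observation that a minimizer in $\operatorname{int}_V C$ forces $c\perp\vec V$, hence $f$ constant on $C$, hence every relative boundary point is a minimizer) is exactly the natural one and fills a genuine omission. Your remark about the degenerate case is also well taken and is in fact a flaw in the \emph{statement} rather than in your proof: if $C$ is itself an affine subspace (e.g.\ $C=V$ and $c\perp\vec V$, in particular $c=0$), then $\partial_V C=\varnothing$ while $f$ does attain its infimum, so the ``only if'' direction fails as literally stated. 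You correctly observe that this case is harmless for the intended application, since for $E_{A,b}$ with $\operatorname{rk}A=n$ the set cannot contain a line, so $E_{A,b}=\operatorname{Aff}(E_{A,b})$ forces $E_{A,b}$ to be a single point, which is a vertex and trivially a minimizer; the conclusion of Theorem \ref{N} is unaffected. One tiny point of presentation: to get $c\perp\vec V$ you should say explicitly that applying your inequality to both $u$ and $-u$ yields $\langle c,u\rangle=0$, but this is immediate.
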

\begin{proof}[Proof of Theorem \ref{N}]
Using the previous lemma it is easy to check the well-known fact  that $f$ attains its minimum on $E_{A,b}$ iff there is a vertex $w\in E_{A,b}^*$ such that $f(w)=\inf_{E_{A,b}} f$. 

Therefore, it suffices to prove that for a given vertex $w$, $\langle c,x\rangle\geq \langle c,w\rangle$ for all $x\in E_{A,b}$ iff $-c\in N_w(E_{A,b})$, i.e. $\langle -c, v\rangle \leq 0$ for all $v\in C_w(E_{A,b})$. 

We begin with the `if' part. By Proposition \ref{odtwarzanie}, for any $x\in E_{A,b}$ we have $x-w\in C_w(E_{A,b})$. Now, $\langle c,w\rangle \leq \langle c,x\rangle $ is equivalent to $\langle c, w-x\rangle\leq 0$, or in other words $\langle -c, x-w\rangle \leq 0$. The latter we know to be true.

Now, for the `only if' part, to prove that $-c\in N_w(E_{A,b})$ we take any point $v\in C_w(E_{A,b})$. Then we consider the approximating sequence $\lambda_\nu(x_\nu-w)\to v$ with $E_{A,b}\ni x_\nu\to w$ and $\lambda_\nu>0$. We have $\langle c, x_\nu\rangle\geq \langle c, w\rangle$ and this remains true when we multiply both sides by $\lambda_\nu$, whence $\langle c, \lambda_\nu(x_\nu-w)\rangle\geq 0$. After multiplying both sides by $-1$ and passing to the limit we obtain $\langle -c, v\rangle\leq 0$ as required.
\end{proof}

\begin{rem}
The theorem above has a straightforward real-life application. It says that any functional $c_p^Tx$, where $p$ is a parameter, attains its minimum (or maximum --- by duality) at a\textsl{ fixed} vertex $w\in E_{A,b}^*$ as long as the cost vectors $-c_p$ remain in the normal cone $N_w(E_{A,b})$. Moreover, we do not need the compactness of $E_{A,b}$ to obtain this, which means that some of the constraints are negligible. 

For instance, suppose that a factory produces $n$ products selling them at prices $c_j$ that could vary, as dictated by the market, in the intervals $[a_j, b_j]$  ($j=1,\dots, n$) and the constraints $Ax\leq b$ correspond to how the machines can be set up (and the data may be incomplete, as they are in real life, i.e. $E_{A,b}$ can be non-compact). Assuming that the set up $\bar{x}\in E_{A,b}$ is optimal for the profit $c^Tx$ to be maximal, the theorem says precisely how may the prices evolve without raising the need of changing the set up $\bar{x}$ in order to keep the profit maximal: to do this we only need to compute the normal cone at $\bar{x}$ (or more accurately, at the vertex corresponding to this optimal point).
\end{rem}

\section{Kuratowski convergence and LP problem}

First, let us recall the notion of convergence of sets we will be using. We will state the definition for a natural type of nets (generalized sequences). Consider a set $E\subset{\Rz}^k_t\times{\Rz}^n_x$ and denote by $E_t:=\{x\in{\Rz}^n\mid (t,x)\in E\}$ its section at $t\in{\Rz}^k$. Also, let $\pi(t,x)=t$ be the natural projection and fix $t_0\in\overline{\pi(E)}$.

\begin{dfn} We write $x\in \limsup_{t\to t_0} E_t$ iff for any neighbourhood $U\ni x$ and for any neighbourhood $V\ni t_0$ there exists a point $t\in V\cap \pi(E)$ different from $t_0$ and such that $E_t\cap U\neq\varnothing$. We call the resulting set \textit{the Kuratowski upper limit} of $E_t$ at $t_0$.

We write $x\in\liminf E_t$ iff for any neighbourhood $U\ni x$ there is a neighbourhood $V\ni t_0$ such that for all $t\in V\cap \pi(E)\setminus\{t_0\}$, we have $E_t\cap V\neq\varnothing$. We call the resulting set \textit{the Kuratowski lower limit} of $E_t$ at $t_0$. 

We say that $E_t$ \textit{converges} to the set $F\subset{\Rz}^n$ iff $$\limsup_{t\to t_0} E_t=\liminf_{t\to t_0} E_t=F.$$ We write then $F=\lim_{t\to t_0} E_t$ or $E_t\stackrel{K}{\longrightarrow} F$ ($t\to t_0$).
\end{dfn}

\begin{rem}
Of course, $\liminf_{t\to t_0} E_t\subset\limsup_{t\to t_0} E_t$ and both sets are closed. Moreover, they do not change, if we take $\overline{E_t}$ instead of $E_t$. Therefore, it is natural to restrict ourselves only to closed sets. Observe also that 
$$
F=\lim_{t\to t_0} E_t\ \Longleftrightarrow\ \limsup_{t\to t_0} E_t\subset F\subset \liminf_{t\to t_0} E_t.
$$
\end{rem}
Note that a sequence of sets $(E_\nu)$ can be identified with the $t$-sections of the set $E=\bigcup_\nu \{1/\nu\}\times E_\nu\subset{\Rz}\times{\Rz}^n$ and thus the upper and lower limits of $(E_\nu)$ for $\nu\to+\infty$ may be understood as $\limsup_{t\to 0} E_t$ and $\liminf_{t\to 0} E_t$, respectively. In this case it is easy to see that $\liminf E_\nu$ consists of all the possible limits of converging sequences $x_\nu\in E_\nu$, while $\limsup E_\nu$ consists of all the possible limits of converging subsequences $x_{\nu_s}\in E_{\nu_s}$.

\begin{rem}
For compact sets, the Kuratowski convergence is exactly the convergence in the usual Hausdorff measure. Note also that for a given set $E\subset{\Rz}^n$ and $a\in\overline{E}$ we have
$$
C_a(E)=\limsup_{\varepsilon \to 0} \frac{E-a}{\varepsilon}.
$$
\end{rem}
We will denote by $H(a;b)$ the affine hypersurface $\langle a,x\rangle =b$, where $||a||=1$ and by $\hat{H}(a;b)$ the half-space defined by $\langle a,x\rangle\leq b$. Observe that $\hat{H}(a_\nu;b_\nu)\stackrel{K}{\longrightarrow}\hat{H}(a;b)$, whenever $a_\nu\to a$, $b_\nu\to b$ and the same is true for the corresponding hypersurfaces.

Recall that we say that two sets $E_1,E_2\subset{\Rz}^n$ {\it can be separated}, if there are $a,b$ such that $E_i\subset \hat{H}((-1)^ia;(-1)^ib)$,  $i=1,2$ which for convex sets is equivalent to $0$ not being an interior point of $E_1-E_2$ (cf. \cite{R} Theorem 2.39).

Let us also note the following easy Proposition:
\begin{prop}
The Kuratowski limit of a converging sequence of convex set is a convex set.
\end{prop}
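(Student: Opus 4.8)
The plan is to reduce everything to the sequential description of the Kuratowski limits recalled just above: for a sequence $(E_\nu)$ the lower limit $\liminf_\nu E_\nu$ is exactly the set of limits of convergent sequences $(x_\nu)$ with $x_\nu\in E_\nu$, while $\limsup_\nu E_\nu$ is the set of limits of convergent subsequences $x_{\nu_s}\in E_{\nu_s}$. Since the sequence converges to $F$, we have in particular $F=\liminf_\nu E_\nu$, so every point of $F$ is approximable by points of the $E_\nu$. The only remaining structural input is the convexity of each individual $E_\nu$, together with the obvious fact that a convex combination of two convergent sequences converges to the corresponding convex combination of the limits.

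Concretely, here is how I would proceed. First dispose of the trivial case $F=\varnothing$ (the empty set being convex), and fix $x,y\in F$ together with $t\in(0,1)$; the goal is to show that $z:=(1-t)x+ty\in F$. Next, using $F=\liminf_\nu E_\nu$, I would produce sequences $x_\nu,y_\nu\in E_\nu$, defined for all $\nu$ large enough --- so that in particular $E_\nu\neq\varnothing$ for such $\nu$ --- with $x_\nu\to x$ and $y_\nu\to y$; if one prefers not to quote the remark above, these sequences are obtained by a one-line diagonal extraction over the balls $B(x;1/k)$ and $B(y;1/k)$, $k\in\Nt$. Then, for all such large $\nu$, convexity of $E_\nu$ yields $z_\nu:=(1-t)x_\nu+ty_\nu\in E_\nu$, and plainly $z_\nu\to z$. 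Hence $z$ is the limit of a sequence whose $\nu$-th term lies in $E_\nu$, i.e.\ $z\in\liminf_\nu E_\nu=F$. As $x,y\in F$ and $t\in(0,1)$ were arbitrary, $F$ is convex.

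I do not expect any genuine obstacle: the statement is essentially a bookkeeping consequence of the sequential characterization of $\liminf$. The only point deserving a word of care is the construction of the approximating sequences $x_\nu,y_\nu$, and the (harmless) fact that the sets $E_\nu$ may be empty for small $\nu$, which does not affect anything since only arbitrarily large indices enter the argument.
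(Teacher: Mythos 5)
Your proof is correct and follows essentially the same route as the paper's: approximate $x,y\in F$ by sequences $x_\nu,y_\nu\in E_\nu$ via the $\liminf$ characterization, use convexity of each $E_\nu$ to place the convex combination inside $E_\nu$, and pass to the limit. The paper phrases the last step as ``the limit of segments is a segment,'' while you argue pointwise with a fixed $t$, which is a purely cosmetic difference.
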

\begin{proof}
Let $C_\nu$ be convex sets converging to a set $C_0$. Take $x,y\in C_0$. Then, due to the convergence, these points are limits of some sequences $x_\nu,y_\nu\in C_\nu$, respectively. But $[x_\nu,y_\nu]\subset C_\nu$ and clearly, the limit of a sequence of segments is a segment (maybe reduced to a point). It follows easily that $[x,y]\subset C_0$.
\end{proof}

We start this section with a short discussion of the following question:\\
Assume that $\varnothing\neq C\subset{\Rz}^n$ is a closed, convex set and $f\colon C\to {\Rz}$ a continuous function with $M:=\sup_{x\in C} f(x)<+\infty$. When does there exist a point $x_0\in C$ such that $f(x_0)=M$?

Of course, the question makes sense in particular for an unbounded set $C$. In general there is not much hope to obtain a positive answer: for $n=1$ and $C=[0,+\infty)$ take $f(x)=\arctan x$. If $f$ were linear, we would have a realizing point in this case.

Even though $f$ is linear, such a realizing point $x_0$  may not exist in general, unless $C$ is a polyhedron. Take $n=2$, $C=\{(x,y)\mid x>0, y\geq 1/x\}$ and $f(x,y)=-y$. 

Nevertheless, the following is true:
\begin{prop}
Let $C=\{x\in{\Rz}^n\mid \langle a_i,x\rangle\leq b_i, i=1,\dots,k\}$ be a nonempty polyhedron and $f(x)=\langle c,x\rangle$ with $M:=\sup_{x\in C} f(x)<+\infty$. Then, independently of the fact whether $C$ is bounded or not, there is a point $x_0\in C$ such that $f(x_0)=M$.
\end{prop}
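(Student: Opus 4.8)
The plan is to proceed by induction on the dimension $d=\dim C$, the case $d=0$ (where $C$ is a single point) being trivial. So let $d\geq 1$, put $V=\operatorname{Aff}(C)$, and regard $C$ as a full-dimensional polyhedron in $V\cong{\Rz}^d$. If the relative boundary $\partial_V C$ is empty, then $C=V$ is an affine subspace, and since $M<+\infty$ the vector $c$ must be orthogonal to the direction space of $V$; hence $f\equiv M$ on $C$ and any point realizes $M$. So assume henceforth $\partial_V C\neq\varnothing$.

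The crux is to show that $\sup_{\partial_V C} f=M$. This is clear if $f$ is constant on $C$, so suppose it is not and fix $x\in\operatorname{int}_V C$. First I would observe that $x$ cannot be a maximizer: if it were, then, $f$ being non-constant, there would exist $p\in C$ with $f(p)<f(x)$, whereas $x+\varepsilon(x-p)\in C$ for small $\varepsilon>0$ (because $x\in\operatorname{int}_V C$ and $x-p$ is a direction in $V$) and $f\bigl(x+\varepsilon(x-p)\bigr)=f(x)+\varepsilon\bigl(f(x)-f(p)\bigr)>f(x)$, a contradiction. Hence there is $y\in C$ with $v:=y-x$ satisfying $\langle c,v\rangle>0$. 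The ray $\{x+tv\mid t\geq 0\}$ cannot be contained in $C$, for then $f(x+tv)=f(x)+t\langle c,v\rangle\to+\infty$, contradicting $M<+\infty$; since $C$ is closed and convex the largest $t$ with $x+tv\in C$ is a finite $t^{\ast}\geq 1$ (note $x+v=y\in C$), and $z:=x+t^{\ast}v$ lies in $\partial_V C$ with $f(z)=f(x)+t^{\ast}\langle c,v\rangle>f(x)$. Thus every relative interior point of $C$ is strictly dominated in $f$-value by a boundary point, so $\sup_{\partial_V C} f=\sup_C f=M$.

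It remains to pass to lower dimension. By the minimal-description discussion recalled in the introduction, the relative boundary $\partial_V C$ is the union of the finitely many facets $F_1,\dots,F_r$ of $C$, each of which is again a polyhedron with $\dim F_j\leq d-1$ and $\sup_{F_j} f\leq M<+\infty$. Since $M=\sup_{\partial_V C} f=\max_{1\leq j\leq r}\sup_{F_j} f$, some facet $F_{j_0}$ satisfies $\sup_{F_{j_0}} f=M$; the inductive hypothesis, applied to $F_{j_0}$ (of strictly smaller dimension), then produces a point $x_0\in F_{j_0}\subset C$ with $f(x_0)=M$, which closes the induction.

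I expect the main obstacle to be this reduction to strictly smaller dimension: one must be certain that $\partial_V C$ is exactly covered by finitely many faces that are themselves polyhedra of lower dimension. This is the only place where the polyhedral hypothesis enters in an \emph{essential} way, and it is precisely what fails for a general closed convex set such as $[0,+\infty)$ with $f(x)=\arctan x$, or $\{(x,y)\mid x>0,\ y\geq 1/x\}$ with $f(x,y)=-y$, as noted just before the statement. A secondary point requiring care is the treatment of the two degenerate situations — $f$ constant on $C$, and $C$ an affine subspace — so that the ray argument is invoked only when $\partial_V C$ is genuinely nonempty.
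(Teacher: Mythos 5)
Your proof is correct, but it follows a genuinely different route from the paper's. You argue by induction on $\dim C$: you show that any relative interior point is strictly dominated by a relative boundary point (by pushing along an ascent direction until the ray, which cannot stay in $C$ because $M<+\infty$, exits through $\partial_V C$), then cover $\partial_V C$ by the finitely many facets, pick one on which the supremum is still $M$, and recurse. The paper instead works directly in the ambient space: it takes a maximizing sequence, replaces it by the nearest points $y_\nu\in C$ to corresponding points of the level set $f^{-1}(M)$, and, in the case $\|y_\nu\|\to+\infty$, extracts a limiting direction $v$ of the normalized differences to produce a ray $\ell=y_1+\mathbb{R}_+v\subset C$ with $\mathrm{dist}(\ell,f^{-1}(M))=0$, concluding that $\ell\subset f^{-1}(M)$ and hence that the supremum is attained along that ray. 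Your induction is more elementary and self-contained, and it isolates cleanly the one place where polyhedrality is essential (the relative boundary is a \emph{finite} union of lower-dimensional polyhedra --- exactly what fails for $\{(x,y)\mid x>0,\ y\geq 1/x\}$); the price is the bookkeeping of relative interiors, affine hulls and facets. The paper's argument is shorter and yields extra geometric information (an entire ray of maximizers in the unbounded case), but it leans on a subsequence-extraction step and on the final implication $\mathrm{dist}(\ell,f^{-1}(M))=0\Rightarrow\ell\subset f^{-1}(M)$, which itself needs the linearity of $f$ on the ray. Both arguments are sound; yours would serve as a valid replacement proof.
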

\begin{proof}
If $f\not\equiv 0$, we may assume that $||c||=1$ and $C$ is unbounded. Then we have $C\subset \hat{H}(c;M)$ and clearly $\mathrm{dist}(f^{-1}(M),C)=0$. Take a sequence $(x_\nu)\subset C$ for which $f(x_\nu)\to M$. Each point $x_\nu$ can be written as $\frac{f(x_\nu)}{||c||^2}c+z_\nu=f(x_\nu)c+z_\nu$ where $z_\nu\in\mathrm{Ker} f$. This gives us points $Mc+z_\nu\in f^{-1}(M)$ and $y_\nu\in C$ realizing their distance to $C$. Then it is easily seen that $f(y_\nu)\to M$. 

We may assume now that $||y_\nu||\to+\infty$ (otherweise the limit of a convergent subsequence yields a point in $C$ realizing $M$ for $f$). Since $y_\nu\in\partial C$, then passing to a subsequence we may assume furhter that $\langle a_i,y_\nu\rangle=b_i$ for $i=1,\dots,N$ with $N\geq 1$, while  $\langle a_i,y_\nu\rangle<b_i$ for $i=N+1,\dots,k$. Then choosing a subsequence we will get $\frac{y_\nu-y_1}{||y_\nu-y_1||}\to v$ and of course $[y_1,y_\nu]\subset C$ for each $\nu$. Then $\ell:=y_1+\mathbb{R}_+v\subset C$ and we obtain $\mathrm{dist}(\ell,f^{-1}(M))=0$, i.e. $\ell\subset f^{-1}(M)$.
\end{proof}

Suppose that $S_\nu$ is the set of  solutions of $c^Tx\to\min$ on $E_{A,b_\nu}$. When do these sets converge to the set of solutions of $c^Tx\to \min$ on $E_{A,b}$ and what can guarantee that the latter is nonempty?

The main theorem of the preceding section gives a possible answer to this problem. Namely, if we know how do behave the normal cones and if we know that the cost vectors are `nicely' related to them, then we can say that the limit problem has a solution and even give the vertex realizing it.

\begin{thm}\label{main}
Let $E_\nu\subset {\Rz}^n$ be a sequence of convex polyhedra such that $E_\nu\stackrel{K}{\longrightarrow} E\neq\varnothing$ where $\varnothing\subsetneq E\subsetneq{\Rz}^n$, and one of the following conditions is satisfied: either $E$ is compact and there is a uniform bound $\#E_\nu^*\leq M$, or there is a uniform bound $f_{\dim E_\nu-1}(E_\nu)\leq M$. Then \begin{enumerate}
\item $E$ is a convex polyhedron, too, and $\#E^*\leq \#E_\nu^*$, for almost all indices;
\item For any vertex $v\in E^*$ there is a sequence of vertices $E_\nu^*\ni v_\nu\to v$ and $C_{v_\nu}(E_\nu)\stackrel{K}{\longrightarrow} C_v(E)$, as well as $N_{v_\nu}(E_\nu)\stackrel{K}{\longrightarrow} N_v(E)$;
\item If $f_\nu\colon {\Rz}^n\to{\Rz}$ is a sequence of linear forms converging to $f\colon{\Rz}^n\to{\Rz}$ and such that each $f_\nu$ attains its maximum on $E_\nu$, then $f$ attains its maximum on $E$; moreover, $\operatorname{argmax} f_\nu\stackrel{K}{\longrightarrow}\operatorname{argmax} f$, provided one of the following conditions holds: either $E$ is compact, or $\#E^*=\#E_\nu^*$ for indices large enough, or $\max_{E} f$ exists and is the limit of $\max_{E_\nu} f_\nu$.
\end{enumerate}
\end{thm}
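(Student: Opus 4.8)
The plan is to prove the three parts in order, extracting as much mileage as possible from Lemma \ref{stozek styczny}, Proposition \ref{odtwarzanie} and Theorem \ref{N}, together with basic stability properties of the Kuratowski limit. For part (1), I would first fix a description $E_\nu=\{x\mid \langle a_i^\nu,x\rangle\le b_i^\nu,\ i=1,\dots,m_\nu\}$ that is \emph{minimal} in the sense discussed after the definition of polyhedron, so that $m_\nu$ (up to the equations cutting out $\operatorname{Aff}(E_\nu)$) equals $f_{\dim E_\nu-1}(E_\nu)\le M$; in the compact case I would instead argue through the $\#E_\nu^*\le M$ vertices and use that a compact polyhedron is the convex hull of its vertices. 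After passing to a subsequence I may assume all the relevant integer counts are constant, all the unit normals $a_i^\nu\to a_i$ and all offsets $b_i^\nu\to b_i$ (the offsets are bounded once we normalize, using that the $E_\nu$ meet a fixed bounded region because they converge to the nonempty $E$). Since $\hat H(a^\nu;b^\nu)\stackrel{K}{\to}\hat H(a;b)$ and a finite intersection of half-spaces passes to the limit under Kuratowski convergence (this needs the nondegeneracy coming from minimality, i.e. that the limiting intersection still has nonempty interior relative to its affine hull, or a separate argument when the dimension drops), the limit $E$ is the polyhedron $\{x\mid\langle a_i,x\rangle\le b_i\}$, hence convex polyhedral. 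The inequality $\#E^*\le\#E_\nu^*$ for almost all $\nu$ then follows from part (2): distinct vertices of $E$ are limits of distinct sequences of vertices of $E_\nu$, which must eventually be distinct.

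For part (2), let $v\in E^*$. By Proposition \ref{prop} (applied in $\operatorname{Aff}(E)$ if needed), $v$ is the unique solution of $\langle a_i,x\rangle=b_i$ for $i$ in a set $I$ of $n$ indices (or $\dim E$ indices within the affine hull) with the corresponding system of full rank. Because the $a_i^\nu\to a_i$ and determinants are continuous, the perturbed systems $\langle a_i^\nu,x\rangle=b_i^\nu$, $i\in I$, have a unique solution $v_\nu\to v$ for large $\nu$; one checks $v_\nu\in E_\nu$ and that $v_\nu$ is extremal in $E_\nu$ (the same $n$ active constraints with independent normals force it, by the easy direction of Proposition \ref{prop}). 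For the convergence of cones: by Lemma \ref{stozek styczny}, $C_{v_\nu}(E_\nu)=\bigcap_{i\in J(v_\nu)}\hat H(a_i^\nu;0)$ and $C_v(E)=\bigcap_{i\in J(v)}\hat H(a_i;0)$; after a further subsequence the active-index set $J(v_\nu)$ is constant, and it equals $J(v)$ because an inactive strict inequality at $v$ stays strict at $v_\nu$, while a constraint active at $v$ must be active at $v_\nu$ once one observes (using the reconstruction Proposition \ref{odtwarzanie} and that $v_\nu\to v$) that no active constraint can be lost in the limit without violating minimality — this is the point that needs care. Then $C_{v_\nu}(E_\nu)\stackrel{K}{\to}C_v(E)$ by stability of finite intersections of half-spaces through the origin (these are cones with nonempty interior in the relevant subspace, so the intersection is well-behaved), and since $N_w(E_{A,b})=\{\sum_{i\in J(w)}\lambda_i a_i\mid\lambda_i\ge0\}$ from the remark following Proposition \ref{odtwarzanie}, the normal cones are the images of the fixed positive orthant $\Rz_+^{J(v)}$ under the linear maps $(\lambda_i)\mapsto\sum\lambda_i a_i^\nu$, which converge; a finitely generated cone depends Kuratowski-continuously on its generators, giving $N_{v_\nu}(E_\nu)\stackrel{K}{\to}N_v(E)$.

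For part (3), write $f_\nu(x)=\langle c_\nu,x\rangle\to\langle c,x\rangle=f(x)$ with $c_\nu\to c$. Since $f_\nu$ attains its maximum on $E_\nu$, equivalently $-(-c_\nu)=c_\nu$ lies in $\bigcup_{w\in E_\nu^*}N_w(E_\nu)$ by Theorem \ref{N} (applied to $-f_\nu$), pick a vertex $w_\nu\in E_\nu^*$ with $c_\nu\in N_{w_\nu}(E_\nu)$. By part (1) and the uniform bound there are only finitely many "vertex patterns'' (choices of $n$-element active-index sets), so along a subsequence $w_\nu$ is defined by a fixed system and $w_\nu\to w\in E^*$ with $C_{w_\nu}(E_\nu)\to C_w(E)$, $N_{w_\nu}(E_\nu)\to N_w(E)$ as in part (2). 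Then $c_\nu\in N_{w_\nu}(E_\nu)$ and $c_\nu\to c$ give, by the very definition of the Kuratowski upper limit, $c\in N_w(E)$; by Theorem \ref{N} (again for $-f$) this means $f$ attains its maximum on $E$ at $w$. For the convergence $\operatorname{argmax}f_\nu\stackrel{K}{\to}\operatorname{argmax}f$: the upper-limit inclusion $\limsup\operatorname{argmax}f_\nu\subset\operatorname{argmax}f$ follows by taking $x_\nu\in\operatorname{argmax}f_\nu$, $x_\nu\to x\in E$, and passing to the limit in $f_\nu(x_\nu)\ge f_\nu(y_\nu)$ for approximating sequences $y_\nu\to y$, for arbitrary $y\in E$ — here one uses one of the three listed hypotheses to identify $\lim f_\nu(x_\nu)=\max_E f$ (in the compact case this is automatic by Hausdorff convergence; in the cardinality-stable case the maximum set is a face spanned by a controlled set of vertices; in the third case it is assumed). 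The lower-limit inclusion $\operatorname{argmax}f\subset\liminf\operatorname{argmax}f_\nu$ is proved by taking $x\in\operatorname{argmax}f$, approximating $x=\lim x_\nu$ with $x_\nu\in E_\nu$, and correcting $x_\nu$ within $E_\nu$ toward $\operatorname{argmax}f_\nu$ (e.g. replacing $x_\nu$ by the nearest point of the face $\{f_\nu=\max_{E_\nu}f_\nu\}$); the correction is small because $f_\nu(x_\nu)\to\max_E f=\lim\max_{E_\nu}f_\nu$ and $E_\nu$ is polyhedral, so one gets a Lipschitz-type bound on the distance to the optimal face.

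I expect the main obstacle to be the two points where the combinatorial type of the polyhedron can degenerate in the limit: first, in part (1), guaranteeing that a finite intersection of half-spaces really passes to the Kuratowski limit when $\dim E<\dim E_\nu$ or when some facet "escapes to infinity'' (this is exactly why the uniform bounds on $\#E_\nu^*$ or $f_{\dim E_\nu-1}(E_\nu)$ are imposed, and the bookkeeping of which constraints survive must be done carefully, probably by separating the affine-hull equations from the genuine facet inequalities and treating the two cases — $E$ compact vs. uniform facet bound — somewhat differently); and second, in part (3), the lower-limit inclusion for $\operatorname{argmax}$, where one must rule out the optimal face of $E$ being "thinner'' than the optimal faces of $E_\nu$ or vice versa — this is precisely what fails without one of the three extra hypotheses, and the proof has to use each of them in a slightly different way. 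The cone-convergence in part (2) and the "only if'' half of part (3) are, by contrast, fairly mechanical once the stabilized active-index sets are in hand.
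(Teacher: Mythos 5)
There is a genuine gap, and it sits exactly at the heart of part (1). You assert that, after normalizing and passing to a subsequence so that $a_i^\nu\to a_i$ and $b_i^\nu\to b_i$, the limit $E$ \emph{is} the polyhedron $\{x\mid\langle a_i,x\rangle\le b_i\}$, i.e.\ that a finite intersection of half-spaces passes to the Kuratowski limit. This is false in general, and the paper's own footnote example shows it: for $E_\nu=\{(x,y)\mid -y\le 0,\ y-(1/\nu)x\le 0\}$ the limiting constraints give the whole $x$-axis, while $E=\lim E_\nu=[0,+\infty)\times\{0\}$. The failure occurs precisely when two sequences of facet normals become antipodal in the limit (the paper's \emph{inverse equivalent} pairs), so that $\dim E<\dim E_\nu$; the naive limit set $E'$ is then strictly larger than $E$, and one must manufacture \emph{additional} limiting constraints. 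The paper does this by adjoining, for each nearly antipodal pair $(i,j)$, the redundant inequality $\langle v(a_i^\nu,a_j^\nu),x\rangle\le u_{ij,\nu}$ supported on $H(a_i^\nu;b_i^\nu)\cap H(a_j^\nu;b_j^\nu)$, passing to the limit in the enlarged system to get a set $E''$, and then proving $E=E''$ via a quantitative argument: the distance from a point to $E_\nu$ is controlled by its constraint violations uniformly in $\nu$ (a Hoffman-type bound, via Bergthaller--Singer and Laurent--Martinet). You flag this as ``the point that needs care'' and ``a separate argument when the dimension drops'', but that separate argument is the substance of the theorem; without it part (1) is unproved, and parts (2)--(3) (which you correctly reduce to the stabilized facet description) inherit the gap.

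Two smaller problems. First, your claim that the offsets $b_i^\nu$ are bounded because the $E_\nu$ meet a fixed bounded region only bounds them from below; $b_i^\nu\to+\infty$ is perfectly possible (a facet escaping to infinity, e.g.\ $E_\nu=[0,\nu]$), and such constraints must be identified and discarded in the limit, as the paper does. Second, in part (2) the assertion that the stabilized active set $J(v_\nu)$ equals $J(v)$ is delicate for the same reason (active constraints can be gained or lost when i-e pairs collapse); the paper handles the degenerate case by working inside $\operatorname{Aff}(E)$ and treating the i-e inequalities separately. Your treatment of part (3) and of the normal cones (finitely generated cones depending continuously on their generators, versus the paper's Lemma \ref{stozki normalne}) is essentially sound once part (1) is in place.
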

\begin{rem}
In the noncompact case a uniform bound on the number of vertices is in general not enough to obtain a polyhedron as the limit. Consider an approximation of the unit circle in ${\Rz}^2$ by $\nu$-gones inscribed in it. Embed the plane ${\Rz}^2\times\{0\}\to {\Rz}^3$ and consider the infinite cones spanned over the $\nu$-gones from the vertex at $(0,0,1)$ --- these are the sets $E_\nu$. Of course, they are convex, non-compact polyhedra converging to the regular cone spanned over the circle from the point $(0,0,1)$. Not only the limit is no longer a polyhedron, but it has infinitely many extremal points, while $E_\nu^*=\{(0,0,1)\}$.

Of course, the assumption that $\varnothing\subsetneq E\subsetneq{\Rz}^n$ is unavoidable, too, cf. $(-\infty,\nu]\stackrel{K}{\longrightarrow} {\Rz}$, while $(-\infty,-\nu]\stackrel{K}{\longrightarrow}\varnothing$ --- in both cases the limit is not a polytope according to our definition.

Finally, the last point can be illustrated by the following example in ${\Rz}^2$: let $f_\nu(x,y)=f(x,y)=-y$ and let $E_\nu=\{(x,y)\in{\Rz}^2\mid x,y\geq 0, \nu y\geq \nu-x\}$. Then $E_\nu$ converges to $E=\{(x,y)\mid x\geq 0,y\geq 1\}$, but the maximizers do not converge.
\end{rem}

In the course of the proof we shall be using the following notions.
\begin{dfn}
Two linear inequalities $\langle a_i,x\rangle \leq b_i$ with $||a_i||=1$, $i=1,2$ are called {\it inverse equivalent} ({\it i-e} for short), if $a_1=-a_2$ and $b_1=-b_2$. 
\end{dfn}
Put together, two i-e inequalities describe the affine hypersurface $H(a_1;b_1)=H_2(a_2,b_2)$.

Let $a_1,a_2\in{\Rz}^n$ be non-colinear unit vectors. We put $\displaystyle v(a_1,a_2):=\frac{a_1+a_2}{|||a_1+a_2||}$.

\begin{lem}\label{stozki normalne}
Let $V_\nu$ and $V$ be real cones (\footnote{I.e. $tV\subset V$ for any $t\geq 0$.}) in ${\Rz}^n$ with $V_\nu\stackrel{K}{\longrightarrow} V$. Then the normal cones $N(V_\nu)$ converge to $N(V)$.
\end{lem}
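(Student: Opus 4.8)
The plan is to prove both inclusions $\limsup_\nu N(V_\nu) \subset N(V)$ and $N(V) \subset \liminf_\nu N(V_\nu)$, using the characterization $N(W) = \{w \mid \langle v, w\rangle \le 0 \ \forall v \in W\}$ (valid for any set, but here $W$ is a cone, so this is exactly the polar cone $W^\circ$). Both directions rest on the fact that Kuratowski convergence of the $V_\nu$ lets us transport test vectors back and forth between $V$ and the $V_\nu$.

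\emph{First inclusion.} Take $w \in \limsup_\nu N(V_\nu)$, so there is a subsequence and $w_{\nu_s} \in N(V_{\nu_s})$ with $w_{\nu_s} \to w$. To show $w \in N(V)$, fix any $v \in V$. Since $V = \liminf_\nu V_\nu$, there are $v_\nu \in V_\nu$ with $v_\nu \to v$; restricting to the subsequence, $\langle v_{\nu_s}, w_{\nu_s}\rangle \le 0$ for all $s$, and passing to the limit gives $\langle v, w\rangle \le 0$. Hence $w \in N(V)$.

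\emph{Second inclusion.} Take $w \in N(V)$; I want $w \in \liminf_\nu N(V_\nu)$, i.e.\ a sequence $w_\nu \in N(V_\nu)$ with $w_\nu \to w$. The natural candidate is the nearest-point projection $w_\nu := \pi_{N(V_\nu)}(w)$ onto the closed convex cone $N(V_\nu)$ (these are closed convex cones since each $V_\nu$ is a cone, so its polar is a closed convex cone). By the projection characterization, $w - w_\nu \in N(V_\nu)^\circ = V_\nu^{\circ\circ} = \overline{V_\nu}$ (bipolar theorem for closed convex cones — but note $V_\nu$ need not be convex here, so one should instead argue via $V_\nu^{\circ\circ} = \overline{\operatorname{conv}}\,V_\nu$, and the test vectors we actually need lie in $V$, handled below). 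The key estimate is $\|w_\nu\| \le \|w\|$ (projection onto a cone containing $0$ is nonexpansive towards $0$), so $(w_\nu)$ is bounded; let $w'$ be the limit of any convergent subsequence. By the first inclusion's argument, $w' \in N(V)$. It remains to show $w' = w$, equivalently $\|w - w'\| = 0$. For this, use $\langle w - w_\nu, w_\nu \rangle = 0$ (projection onto a cone is orthogonal to the image point) to get $\|w_\nu\|^2 = \langle w, w_\nu\rangle$, hence along the subsequence $\|w'\|^2 = \langle w, w'\rangle$, i.e.\ $\langle w - w', w'\rangle = 0$. Also, since $w' \in N(V)$ and, by a limiting argument using $V \subset \liminf V_\nu$ together with $w - w_\nu \in \overline{\operatorname{conv}}\,V_\nu$, one checks $w - w' \in N(V)^\circ = \overline{\operatorname{conv}}\,V$; combined with $w \in N(V)$ this yields $\langle w, w - w'\rangle \le 0$. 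Adding the two relations gives $\|w - w'\|^2 = \langle w - w', w - w'\rangle = \langle w, w-w'\rangle - \langle w', w-w'\rangle \le 0$, so $w' = w$. Since every convergent subsequence of the bounded sequence $(w_\nu)$ converges to $w$, the whole sequence converges to $w$, proving $w \in \liminf_\nu N(V_\nu)$.

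\emph{Main obstacle.} The delicate point is the step $w - w_\nu \in \overline{\operatorname{conv}}\,V_\nu$ and the subsequent limiting claim $w - w' \in \overline{\operatorname{conv}}\,V$: for non-convex cones $V_\nu$ the bipolar is the closed convex conical hull, not $\overline{V_\nu}$, so one must be careful that the inequality $\langle w - w', v\rangle \le 0$ is only needed for $v \in V$ (not its convex hull) — and indeed $w \in N(V)$ means $\langle w, v\rangle \le 0$ for $v\in V$, while $\langle w', v\rangle \le 0$ for $v \in V$ because $w' \in N(V)$; so $\langle w - w', v\rangle$ has no definite sign in general, and the clean relation I actually exploit is the orthogonality identity $\langle w - w_\nu, w_\nu\rangle = 0$ together with $\langle w - w_\nu, w - w_\nu \rangle \ge 0$ and boundedness. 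In the write-up I would streamline this: prove boundedness of $(w_\nu)$, extract a limit $w' \in N(V)$ via the first inclusion, then show $\|w-w'\| \le \liminf \|w - w_\nu\| \le \|w - w\| = 0$ is \emph{not} quite free — instead use that $w$ itself is a competitor only if $w \in N(V_\nu)$, which fails — so the honest route is the two orthogonality/polarity identities above. This bookkeeping with polars of possibly non-convex cones is the part that needs the most care; everything else is a routine diagonal/limit argument.
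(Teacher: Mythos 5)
Your first inclusion is correct and is the same argument as in the paper. For the reverse inclusion you take a genuinely different route: the paper tries to show that $w$ itself lies in $N(V_\nu)$ for almost all $\nu$ (by arguing that $V\subset\hat{H}(w;0)$ forces $V_\nu\subset\hat{H}(w;0)$ eventually), whereas you approximate $w$ by its metric projections $w_\nu=\pi_{N(V_\nu)}(w)$ and use the Moreau orthogonality identity together with bipolarity. The idea is good, but the step you yourself flag as delicate is a genuine gap, not mere bookkeeping. You need $\langle w,w-w'\rangle\le 0$, and for $w\in N(V)$ this requires $w-w'$ to lie in the closed convex conical hull of $V$; you propose to deduce this from $w-w_\nu\in N(V_\nu)^{\circ}=\overline{\operatorname{conv}}(V_\nu)$ by passing to the limit. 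However, Kuratowski convergence $V_\nu\to V$ does \emph{not} imply $\limsup_\nu\overline{\operatorname{conv}}(V_\nu)\subset\overline{\operatorname{conv}}(V)$ for unbounded sets, and no patch exists in general because the lemma as literally stated is false for non-convex cones. Indeed, take in $\mathbb{R}^2$ the cones $V_\nu=\mathbb{R}_+(1,1/\nu)\cup\mathbb{R}_+(-1,1/\nu)$: they converge to $V=\mathbb{R}\times\{0\}$, whose normal cone is the whole $y$-axis, while $N(V_\nu)=\{(w_1,w_2)\mid w_2\le-\nu|w_1|\}$ converges only to $\{0\}\times(-\infty,0]$. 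Here $\overline{\operatorname{conv}}(V_\nu)=\{(x,y)\mid \nu y\ge|x|\}$ converges to the closed upper half-plane rather than to $\overline{\operatorname{conv}}(V)=V$, which is exactly where your limiting step breaks down.

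The fix is to add the hypothesis that the cones are closed and convex --- which is the only case in which the lemma is used in the paper, the $V_\nu$ being tangent cones of convex polyhedra. Then $N(V_\nu)^{\circ}=V_\nu^{\circ\circ}=V_\nu$, so $w-w_\nu\in V_\nu$ and, along the convergent subsequence, $w-w'\in\limsup_\nu V_\nu=V$; combined with $\langle w',w-w'\rangle=0$ (Moreau) and $\langle w,w-w'\rangle\le 0$ (now legitimate, since $w\in N(V)=V^{\circ}$ and $w-w'\in V$), your computation gives $\|w-w'\|^2\le 0$, and the boundedness argument finishes the proof. With convexity stated explicitly and the detour through convex hulls removed, your projection argument is complete and, for what it is worth, more robust than the paper's own proof of this inclusion, which asserts $V_\nu\subset\hat{H}(w;0)$ for large $\nu$ --- a claim that already fails for rotating half-planes and likewise needs repair.
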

\begin{proof}
Take $w\in \limsup N(V_\nu)$  and $v\in V$. Then there is a sequence $V_\nu\ni v_\nu\to v$ and a subsequence $N(V_{\nu_k})\ni w_{\nu_k}\to w$. Since $\langle w_{\nu_k}, v_{\nu_k}\rangle\leq 0$, we get $\langle w,v\rangle \leq 0$, i.e. $w\in N(V)$.

Fix now $w\in N(V)$. Without loss of generality we may assume that $||w||=1$. Then $V\subset \hat{H}(w;0)$ and the type of convergence implies that for large indices, $V_\nu\subset \hat{H}(w,0)$. Indeed, $V_\nu\cap {\Rz}^n\setminus\{0\}=V_\nu\setminus\{0\}$ converge to $V\setminus\{0\}$, whence $V_\nu\setminus\{0\}\cap \mathrm{int} \hat{H}(w,0)$ converge to $V\setminus\{0\}\cap\mathrm{int} \hat{H}(w;0)=V\setminus\{0\}$. It follows that $w\in N(V_\nu)$, for almost all indices, i.e. $N(V)\subset\liminf N(V_\nu)$.
\end{proof}

\begin{proof}[Proof of Theorem \ref{main}]
If $E$ is compact, then so are the sets $E_\nu$, from some index onward (this follows directly from the definition of the convergence, compare e.g. \cite{DD}). Then it is easy to see that  $f_{k}(E_\nu)\leq \binom{f_0(E_\nu)}{k+1}$, since a $k$-dimensional face must contain $k+1$ affinely independent points that define it. Therefore, we will be working under the assumption that the number of facets is uniformly bounded. 

By passing to a subsequence, we may assume that all the polyhedra $E_\nu$ have the same dimension $d$ and then that the numbers $f_{k}(E_\nu)$, $k=0,
\dots,d-1$ are independent of the index, both in the compact and non-compact case. 

What is more, we may assume that $d=n$ due to the following argument. Let $V_\nu=\operatorname{Aff}(E_\nu)$ and let $\vec{V}_\nu$ be the underlying vector space. Then by the Zarankiewicz Theorem (i.e. sequential compacity), after passing to a subsequence we can find a limit $\vec{V}_0=\lim\vec{V}_\nu$ which is, obviously, also a $d$-dimensional vector space. But if we fix a point $x_0\in E$ and take any sequence $E_\nu\ni x_\nu\to x_0$, then we see that $V_\nu$ converge to $V_0:=\vec{V}_0+x_0$ and of course, $V_0\supset E$. 

It follows now easily from the definition of the Kuratowski convergence that we may assume that all the sets $E_\nu$ lie in the same $d$-dimensional space $V_0$, or rather that, actually, we are dealing with $n$-dimensional polyhedra. 

This implies that we can describe the sets $E_\nu$ in the following manner:
$$
E_\nu\colon \langle a_{i,\nu},x\rangle\leq b_{i,\nu},\> i=1,\dots, N=f_{n-1}(E_\nu),
$$
with $||a_{i,\nu}||=1$ for all $i,\nu$. Again, passing to a subsequence, we may assume that $a_{i,\nu}\to a_i$ for each $i$ when $\nu\to +\infty$. 

Now, each sequence $(b_{i,\nu})_\nu$ may be bounded or unbounded. Note that since $E\neq\varnothing$, we cannot have $b_{i,\nu}\to -\infty$. On the other hand, if $b_{i,\nu}\to +\infty$, then from the set-theoretical point of view, the corresponding $i$-th constraint stops playing any role in the description, i.e. we may forget it in the limit. The only interesting case is when (for a subsequence) $b_{i,\nu}\to b_i\in\mathbb{R}$.

Assume that, passing to a subsequence, $(b_{i,\nu})_\nu$ have limits $b_i$ for $i=1,\dots, N'$ and diverge to $+\infty$ for $i=N'+1,\dots, N$. Observe that there must be $N'\geq 1$, because $E\neq{\Rz}^n$. Consider first the set
$$
E'\colon \langle a_i,x\rangle\leq b_{i,\nu},\> i=1,\dots, N'.
$$
It may happen that some pairs of the constraints above are i-e. Suppose that this is the case for the indices $i,j$. It may happen that $a_{i,\nu}=-a_{j,\nu}$ for all indices (but, of course, $b_{i,\nu}\neq b_{j,\nu}$ due to the assumption that $\dim E_\nu=n$) --- we will say then that the pair of constraints $(i,j)$ is {\it parallel i-e}. Suppose, however, that it is not the case, i.e. we can assume that $a_{i,\nu},a_{j,\nu}$ are not colinear, for all indices (as usual, by extracting a subsequence). Then $v_{ij,\nu}=v(a_{i,\nu}, a_{j,\nu})$ make sense and due to the type of convergence, the positive cones ${\Rz}_+a_{i,\nu}+{\Rz}_+a_{j,\nu}$ must converge to an affine half-plane. Therefore, the vectors $v_{ij,\nu}$ have a well-defined limit $v_{ij}$ (for once there is no need to extract a subsequence). 

In this situation, adding to the description of $E_\nu$ the inequality $\langle v_{ij,\nu},x\rangle\leq u_{ij,\nu}$ where $u_{ij,\nu}$ is the value at a point $x_0$ satisfying $\langle a_{i,\nu}, x_0\rangle = b_{i,\nu}$ and $\langle a_{j,\nu}, x_0\rangle = b_{j,\nu}$ (\footnote{There must necessarily exist such a point for large indices $\nu$, for by assumptions $H(a_{i,\nu};b_{i,\nu})$ and $H(a_{j,\nu};b_{j,\nu})$ are not parallel.}), does not change $E_\nu$. We may assume that $v_{ij,\nu}\to v_{ij}$.

We face again two possibilities. Namely $W_{ij,\nu}:=H(a_{i,\nu},b_{i,\nu})\cap H(a_{j,\nu},b_{j,\nu})$ may converge (after passing to a subsequence) to an affine $n-2$-dimensional subspace $W_{ij}$, or to the empty set: this depends on whether the translating vectors $w_{ij,\nu}$ in $W_{ij,\nu}=w_{ij,\nu}+\vec{W}_{ij,\nu}$ with $||w_{ij,\nu}||=\mathrm{dist}(0,W_{ij,\nu})$ have a bounded subsequence or not. Clearly, this corresponds to the behaviour of $u_{ij,\nu}$, i.e. we will obtain $W_{ij}$, provided the $u_{ij,\nu}$ converge to some $u_{ij}\in{\Rz}$. Otherwise, if $W_{ij,\nu}\stackrel{K}{\longrightarrow}\varnothing$, then $\hat{H}(a_{i,\nu},b_{i,\nu})\cap \hat{H}(a_{j,\nu},b_{j,\nu})$ converge to an affine hyperplane and we do not need to bother adding the additional constraint $\langle v_{ij,\nu},x\rangle\leq u_{ij,\nu}$ to the description of $E_\nu$, as it does not play any role in the limit.

We introduce now the set $$E'':=E'\cap\{x\in{\Rz}^n\mid\langle v_{ij},x\rangle\leq u_{ij}, (i,j)\in \mathcal{I}\}$$ where $\mathcal{I}$ is the set of all pairs of indices from $\{1,\dots,N'\}$ that are i-e but not parallel i-e and for which $u_{ij}$ is well-defined. We claim that $E=E''$.

It is obvious that $E\subset E''$: for $E=\liminf E_\nu$, whence any $x_0\in E$ is the limit of some sequence of points $x_\nu\in E_\nu$ and we just pass to the limit in the description (\footnote{Remember that we are working on a subsequence of $E_\nu$ chosen by taking into account $\mathcal{I}$, among other conditions.}). To prove the converse, take a point $x_0\in E''$. There is $x_0\in E'$ and if we had only strict inequalities in the description, we would be able to move $a_i$ and $b_i$ to $a_{i,\nu}$ and $b_{i,\nu}$, for sufficiently large indices $\nu$, without changing the inequalities; i.e. $x_0\in E$ in such a case. Assume, however, that there is
\begin{align*}
&\langle a_i,x_0\rangle=b_i,\> i=1,\dots, N''\\
&\langle a_i,x_0\rangle<b_i,\> i=N''+1,\dots, N',
\end{align*}
where $1\leq N''\leq N'$. We may also assume that for $\nu \gg 1$, 
\begin{align*}
&\langle a_{i,\nu},x_0\rangle>b_{i,\nu} \> i=1,\dots, N'''\\
&\langle a_{i,\nu},x_0\rangle\leq b_{i,\nu},\> i=N'''+1,\dots, N',
\end{align*}
for some $1\leq N'''\leq N''$. Observe that it implies that for the distance $d_\nu:=\mathrm{dist}(x_0,E_\nu)$ which is realized by exactly one point $x_\nu\in E_\nu$ (due to the convexity of the sets $E_\nu$), we necessarily have $\langle a_{i_\nu,\nu},x_\nu\rangle =b_{i_\nu,\nu}$, for some $i_\nu$thatnecessarily belongs to $\{1,\dots, N'''\}$ (the point realizing the distance has to lie on the boundary). Then we may assume that $i_\nu=:i_0$ does not depend on $\nu$, i.e., to be more specific, that we have (possibly after a permutation of $\{1,\dots,N'''\}$)
\begin{align*}
&\langle a_{i,\nu},x_\nu\rangle=b_{i,\nu},\> i=1,\dots, i_0\\
&\langle a_{i,\nu},x_\nu\rangle<b_{i,\nu},\> i=i_0+1,\dots, N'.
\end{align*}

Now, $d_\nu\to d:=\mathrm{dist}(x_0,E)$, because, if $\varepsilon>0$, then $\mathbb{B}(x_0,d-\varepsilon)\cap E=\varnothing$, while $\mathbb{B}(x_0,d+\varepsilon)\cap E\neq \varnothing$ and these conditions hold also for $\nu\gg 1$, due to the convergence (cf. \cite{DD} Lemma 2.1). This implies $d-\varepsilon<d_\nu<d+\varepsilon$, $\nu\gg 1$, as required. Moreover, $(x_\nu)$ has to be a bounded sequence, since $d_\nu=||x_0-x_\nu||$, so that we may assume that $x_\nu\to \bar{x}_0$. Of course, $\bar{x}_0\in E$ and it realizes $d$. This realizing point is unique, because $E$ is a convex set, too.

Suppose that all the points $x_\nu$ lie on a facet of the corresponding set $E_\nu$, i.e. $i_0=1$. Then, there must be $x_0=x_\nu+d_\nu a_{i_0,\nu}$, which means that $\langle a_{i_0,\nu},x_\nu\rangle =b_{i_0,\nu}$ yields 
$$
\langle a_{i_0,\nu},x_0\rangle-d_\nu =b_{i_0,\nu}.
$$
By passing to the limit, we get 
$$
\langle a_{i_0},x_0\rangle-d =b_{i_0}.
$$
But $\langle a_{i_0},x_0\rangle=b_{i_0}$, whence $d=0$, i.e. $x_0=\bar{x}_0\in E$.

Suppose that $i_0>1$ and let $x_{\nu,i}$ denote the orthogonal projections of $x_0$ onto $H(a_{i,\nu};b_{i,\nu})$ for $i=1,\dots, i_0$ and $d_{\nu, i}=\mathrm{dist}(x_0, H(a_{i,\nu};b_{i,\nu}))$. By the argument above, $d_{\nu,i}\to 0$. Now, if $\{1,\dots, i_0\}\times\{1,\dots, i_0\}\cap \mathcal{I}=\varnothing$, then this implies that $x_\nu\to x_0$, i.e. $x_0=\bar{x}_0$. Otherwise, let us consider the corresponding additional constraints $\langle v_{ij,\nu},x\rangle\leq u_{ij,\nu}$ together with the orthogonal projections $x_{\nu, ij}$ to $H(v_{ij,\nu}, w_{ij,\nu})$ and the corresponding distances $d_{\nu,ij}$. Note that we necessarily have $\langle v_{ij,\nu},x_\nu\rangle=u_{ij,\nu}$, whence, as earlier, we obtain $d_{\nu,ij}\to 0$. Nowarguing similarly asin the proof of \cite{BS} Theorem 1.1 based on \cite{LM} Formula (13) (compare \cite{BS} Theorem 1.3;in particular the constant in this theorem is bounded), this is sufficient to conclude that $x_\nu\to x_0$ (\footnote{Essentially, what is taken care of here may be illustrated by the following simple example in the plane: let $E_\nu$ be given by $-y\leq 0$ and $y-(1/\nu)x\leq 0$ which are i-e constraints; these sets converge to $E=[0,+\infty)\times \{0\}$ but $E'$ is the whole $x$-axis.}). This ends the proof of (1).

Once we have obtained (1) with the convergence of the facets, we directly get (2) from simple linear algebra (compare Proposition \ref{prop}): if $E$ is $n$-dimensional, then a vertex is described by $n$ linearly independent inequalities. Then thenearby inequalities are linearly independent and it follows that they define a vertex approaching the one in question. If, however, we had some i-e inequalities so that $\dim E=k<n$, then the same kind of argument works for $k$ describing functions restricted to $\mathrm{Aff}(E)$. If we take into account also the i-e inequalities, then we see that the vertex must be a limit of vertices. Proposition \ref{stozek styczny} implies now the convergence of the tangent cones, while Lemma \ref{stozki normalne} yields the assertion concerning the normal cones. Finally, the first part of (3) holds, because $f(x)=\langle c,x\rangle$ attains a maximum on $E$ iff $\hat{H}(c/||c||,b)\supset E$ for some $b$; since this holds foreach index $\nu$, it will hold also in the limit. For the second part, the compactness of $E$ implies the compactness of $E_\nu$ and Theorem \ref{N} gives the result. The same argument is valid, if the number of vertices is constant, since the maximum is realized in a vertex. If we know that the maxima $M_\nu$ converge to the maximum $M$ of $f$ on $E$, then we easily get the convergence of the maximizers $H(c_\nu/||c_\nu||;M_\nu/||c_\nu||)\cap E_\nu$ to $H(c/||c||,M/||c||)\cap E$ using the half-spaces (compare \cite{R} Theorem 4.32).
\end{proof}
\begin{rem}
Let us observe that a particular case of this theorem can be directly derived from \cite{R} Theorem 4.32 (a). Essentially, this theorem states that if $A_\nu\to A$ for linear maps $A_\nu,A\colon{\Rz}^n\to{\Rz}^m$ and the sets $A({\Rz}^n)$ and $\prod_{i=1}^m(-\infty,b_i]$ cannot be separated, then $E_{A_\nu,b_\nu}\stackrel{K}{\longrightarrow}E_{A,b}$ when $b_\nu\to b$. This, however, does not cover entirely our result.
\end{rem}

The last point of the Theorem is a particular instance of the De Giorgi-Franzoni Theorem, namely:
\begin{thm}
Assume that the vectors $c_\nu\in{\Rz}^n$ converge to $c$ and let $M_\nu$ denote the set of minimizers of $f_\nu(x)=c_\nu^Tx$ in $E=E_{A,b}$. Then $M_\nu$ converge in the sense of Kuratowski to the set $M\subset E$ being the set of minimizers for the limiting functional $f(x)=c^Tx$.
\end{thm}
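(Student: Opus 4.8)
The plan is to read this statement off as the particular case of part (3) of Theorem \ref{main} in which the approximating polyhedra form the \emph{constant} sequence $E_\nu:=E=E_{A,b}$, after converting minimization into maximization. Concretely, put $g_\nu(x):=-c_\nu^Tx$ and $g(x):=-c^Tx$; then $g_\nu\to g$ as linear forms, $g_\nu$ attains its maximum on $E$ precisely when $f_\nu$ attains its minimum on $E$, and $\operatorname{argmax}_E g_\nu=M_\nu$, $\operatorname{argmax}_E g=M$. We read the statement as including that each $f_\nu$ attains its minimum on $E$ (i.e.\ $M_\nu\neq\varnothing$), which is the natural interpretation of ``the set of minimizers''; that $M\neq\varnothing$ will be part of the conclusion.

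First I would check that the hypotheses of Theorem \ref{main} are met by the constant sequence $(E_\nu)$. A constant sequence of sets Kuratowski-converges to its common value, so $E_\nu\stackrel{K}{\longrightarrow}E$; the requirement $\varnothing\subsetneq E\subsetneq{\Rz}^n$ is automatic, being built into the definition of a polyhedron; and the needed uniform finiteness is trivial, since either $E$ is compact with $\#E_\nu^*=\#E^*<\infty$ a fixed number, or else $f_{\dim E_\nu-1}(E_\nu)=f_{\dim E-1}(E)<\infty$ is a fixed number. Thus Theorem \ref{main} applies. Its part (3), using that each $g_\nu$ attains its maximum on $E=E_\nu$, then yields that $g$ attains its maximum on $E$ (equivalently $M\neq\varnothing$, and $M$ is the set of minimizers of $f$), and moreover that $\operatorname{argmax} g_\nu\stackrel{K}{\longrightarrow}\operatorname{argmax} g$, because the proviso ``$\#E^*=\#E_\nu^*$ for indices large enough'' holds with both numbers equal to $\#E^*$ for \emph{every} $\nu$. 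Translating back through $g=-f$ gives $M_\nu\stackrel{K}{\longrightarrow}M$, which is the assertion.

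I do not expect a genuine obstacle, as all the substantive work already resides in Theorem \ref{main}. For completeness one can note that the inclusion $\limsup_\nu M_\nu\subset M$ is elementary in isolation: if $M_{\nu_k}\ni x_{\nu_k}\to x$, then, using that $f_\nu\to f$ uniformly on the bounded set $\{x_{\nu_k}\}\cup\{x\}$ and that $f_{\nu_k}(z)\to f(z)$ for each fixed $z\in E$, one gets $f(x)=\lim_k f_{\nu_k}(x_{\nu_k})\leq\lim_k f_{\nu_k}(z)=f(z)$ for all $z\in E$, so $x\in M$ and in particular $M\neq\varnothing$ as soon as $\limsup_\nu M_\nu\neq\varnothing$. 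The reverse inclusion $M\subset\liminf_\nu M_\nu$ is the genuinely polyhedral part: it relies on $E$ having finitely many faces and on linear functionals that are bounded below on a polyhedron attaining their infimum (the Proposition preceding Theorem \ref{main}, together with Theorem \ref{N} and Proposition \ref{prop}), and it is exactly what Theorem \ref{main}(3) provides; so I would lean on that rather than reprove it. A remark worth making is that the nonemptiness hypothesis $M_\nu\neq\varnothing$ cannot be dropped for all $\nu$ simultaneously, but it suffices that it hold for all large $\nu$, since the recession cone of $E$ is fixed and its polar cone is closed, whence $c=\lim c_\nu$ keeps $f$ bounded below on $E$ and therefore, by that same Proposition, $f$ attains its minimum on $E$.
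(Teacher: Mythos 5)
Your reduction is exactly the route the paper itself takes: no separate proof is offered for this theorem, which is presented only as the constant-sequence instance of part (3) of Theorem \ref{main} (framed as a case of De Giorgi--Franzoni), and your verification of the hypotheses of Theorem \ref{main} for $E_\nu\equiv E$ --- including the need to read in $M_\nu\neq\varnothing$ and the observation that the uniform facet bound is automatic --- is careful and correct as far as it goes.

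There is, however, a genuine gap, inherited from precisely the step you decline to reprove. The inclusion $M\subset\liminf_\nu M_\nu$, which you rightly single out as the substantive half, is \emph{false} for unbounded $E$, so the appeal to Theorem \ref{main}(3) under the proviso ``$\#E^*=\#E_\nu^*$'' (or under ``$\max_E f=\lim\max_{E_\nu}f_\nu$'') cannot close the argument: those provisos do not in fact secure the liminf inclusion in the noncompact case. Concretely, take $E=\{(x,y)\in{\Rz}^2\mid x\geq 0,\ y\geq 0\}$ and $c_\nu=(1/\nu,1)\to c=(0,1)$. Each $f_\nu$ attains its minimum $0$ on $E$, $\min_E f_\nu\to\min_E f=0$, and $\#E^*=\#E_\nu^*=1$; yet $M_\nu=\{(0,0)\}$ for every $\nu$, while $M=[0,+\infty)\times\{0\}$, so $\lim M_\nu=\{(0,0)\}\subsetneq M$. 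What survives in general is exactly what your elementary side-argument establishes: $\limsup_\nu M_\nu\subset M$ together with convergence of the optimal values --- the genuine content of De Giorgi--Franzoni. To obtain full Kuratowski convergence one must add a hypothesis, e.g.\ that $E$ is compact, or that $M$ is a single vertex with $-c$ in the interior of its normal cone (which forces $M_\nu=M$ for $\nu$ large by Theorem \ref{N}); absent such an assumption, neither your write-up nor the paper contains a proof of the liminf inclusion, because none exists.
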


It follows from the proof of Theorem \ref{main} that for linear polytopes we have also the following strong result.
\begin{thm}
A sequence of linear polytopes $E_\nu$ converges iff their boundaries $\partial E_\nu$ converge and then
$$
\partial\lim_{\nu\to+\infty} E_\nu=\lim_{\nu\to+\infty} \partial E_\nu.
$$
Moreover, if the polytopes $E_\nu$ have nonempty interiors, then  $\mathbb{R}^n\setminus E_\nu$ converges to the complement of the limit of the sets $E_\nu$.
\end{thm}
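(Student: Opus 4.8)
The plan is to reduce the whole statement to the structural description of convergent sequences of polyhedra established inside the proof of Theorem \ref{main}, together with two elementary stability properties of the Kuratowski limit: it commutes with \emph{finite} unions, and it commutes with the convex-hull operation on \emph{uniformly bounded} compact sets. Since both the property of convergence and the value of the limit can be tested along subsequences, I would first pass, exactly as in that proof, to a subsequence along which all the $E_\nu$ have one and the same dimension and lie in a fixed affine space --- which lets me assume they are $n$-dimensional --- and along which they are given in minimal form by
$$E_\nu=\bigcap_{i=1}^{N}\hat{H}(a_{i,\nu};b_{i,\nu}),\qquad \|a_{i,\nu}\|=1,\quad a_{i,\nu}\to a_i,\quad b_{i,\nu}\to b_i\in\Rz\cup\{+\infty\},$$
with $N=f_{n-1}(E_\nu)$ the number of facets; here $b_{i,\nu}\to-\infty$ is excluded because $E\neq\varnothing$, and the indices with $b_i=+\infty$ are exactly the constraints that drop out in the limit. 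If $E$ itself has empty interior then $\partial E=\overline{E}=E$ and every assertion is immediate, so the genuinely interesting case, on which I would concentrate, is $\operatorname{int}E\neq\varnothing$.

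For the implication ``$E_\nu$ converges $\Longrightarrow$ $\partial E_\nu$ converges and $\partial\lim E_\nu=\lim\partial E_\nu$'', the point is that a full-dimensional convex polyhedron written in \emph{any} description satisfies $\operatorname{int}E_\nu=\{x:\langle a_{i,\nu},x\rangle<b_{i,\nu}\ \text{for all }i\}$ (if a defining constraint held with equality at an interior point, moving slightly along the corresponding $a_{i,\nu}$ would leave $E_\nu$), whence
$$\partial E_\nu=\bigcup_{i=1}^{N}\bigl(E_\nu\cap H(a_{i,\nu};b_{i,\nu})\bigr)\quad\text{and}\quad \partial E=\bigcup_{i:\,b_i<+\infty}\bigl(E\cap H(a_i;b_i)\bigr).$$
The analysis in the proof of Theorem \ref{main} (the passage showing $E=E''$, together with point (2)) shows that, along our subsequence, each of the finitely many facets $E_\nu\cap H(a_{i,\nu};b_{i,\nu})$ Kuratowski-converges --- to $E\cap H(a_i;b_i)$ when $b_i<+\infty$, and to $\varnothing$ when $b_i=+\infty$ (in the latter case because $\langle a_{i,\nu},x\rangle=b_{i,\nu}\to+\infty$ forces $\|x\|\to+\infty$; the finite case is precisely where the inverse-equivalent pairs of constraints and the auxiliary inequalities $\langle v_{ij,\nu},x\rangle\le u_{ij,\nu}$ are needed). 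Since a finite union of convergent sequences of sets converges to the union of the limits, $\partial E_\nu\stackrel{K}{\longrightarrow}\partial E$. The ``moreover'' is obtained the same way: when $\operatorname{int}E_\nu\neq\varnothing$ one has $\overline{\Rz^n\setminus E_\nu}=\Rz^n\setminus\operatorname{int}E_\nu=\bigcup_{i=1}^{N}\hat{H}(-a_{i,\nu};-b_{i,\nu})$, a finite union of closed half-spaces converging termwise (the $b_i=+\infty$ ones contributing $\varnothing$), so $\Rz^n\setminus E_\nu\stackrel{K}{\longrightarrow}\bigcup_{i:\,b_i<+\infty}\hat{H}(-a_i;-b_i)=\Rz^n\setminus\operatorname{int}E$, the closure of the complement of $E=\lim E_\nu$.

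For the converse I would reconstruct $E_\nu$ from $\partial E_\nu$ and use continuity of that reconstruction. When the polytopes are compact --- hence, after the reduction above, full-dimensional of a common dimension --- one has $E_\nu=\operatorname{conv}(\partial E_\nu)$, and the sets $\partial E_\nu$ are eventually contained in a fixed ball: $\partial E_\nu$ is connected (for $n\ge 2$ it is a topological sphere) and meets a fixed bounded region, since $\liminf\partial E_\nu=\lim\partial E_\nu$ is nonempty and bounded; so if it also contained points escaping to infinity it would, by connectedness, contain points of every large radius, contradicting the boundedness of $\limsup\partial E_\nu$. As $\operatorname{conv}$ is continuous for the Kuratowski convergence of uniformly bounded compact sets, $E_\nu\stackrel{K}{\longrightarrow}\operatorname{conv}\bigl(\lim\partial E_\nu\bigr)=:E$, and the forward implication, applied to this now-convergent sequence, gives $\partial E=\lim\partial E_\nu$. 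Some compactness (or at least a uniform facet bound and no collapse of dimension) is genuinely needed here, for the boundary alone does not determine an \emph{unbounded} polyhedron --- two complementary half-spaces share a boundary --- nor does it detect a drop of dimension; in such cases the statement should be read together with the convergence of the complements obtained above.

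The one step that is not routine bookkeeping is the Kuratowski convergence of the individual faces $E_\nu\cap H(a_{i,\nu};b_{i,\nu})$ when $b_i<+\infty$, that is, the control of facets that collapse onto faces of lower dimension and, above all, the inverse-equivalent phenomenon in which two facets come to be supported by one and the same hyperplane from opposite sides. This is exactly what the long middle part of the proof of Theorem \ref{main} takes care of --- via the vectors $v_{ij,\nu}$, the sets $W_{ij,\nu}$, and the quantitative estimate borrowed from \cite{BS} and \cite{LM} --- so here it need not be reproved, only quoted; everything else is the interaction of finite unions, convex hulls and complements with the Kuratowski limit under the boundedness arranged above.
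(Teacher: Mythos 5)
Your proposal follows essentially the route the paper intends: the paper gives no separate argument for this theorem beyond the remark that it ``follows from the proof of Theorem \ref{main}'', and your derivation of the forward implication and of the statement about complements --- reduce to a common dimension, write $E_\nu$ as a minimal intersection of half-spaces with normalized convergent data, observe that $\partial E_\nu=\bigcup_i\bigl(E_\nu\cap H(a_{i,\nu};b_{i,\nu})\bigr)$ and $\overline{\Rz^n\setminus E_\nu}=\bigcup_i\hat{H}(-a_{i,\nu};-b_{i,\nu})$ are \emph{finite} unions of termwise convergent pieces, and quote the facet convergence established inside that proof --- is a faithful and considerably more explicit elaboration of exactly that. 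Two points deserve comment. First, your observation that the ``boundaries converge $\Rightarrow$ sets converge'' direction fails for unbounded polyhedra is correct and important: for $E_\nu=\hat{H}((-1)^\nu e_1;0)$ the boundaries are constantly the hyperplane $x_1=0$, yet the sets oscillate between two complementary half-spaces and do not converge, so the equivalence as stated requires compactness (or some supplementary datum such as the convergence of the complements); the paper does not make this restriction explicit, and your proposal is the more honest for flagging it. Second, there is one loose end in your compact converse: you assert that $\lim\partial E_\nu$ is bounded before running the connectedness argument, but Kuratowski limits of compact sets need not be bounded (e.g.\ $\partial([0,\nu]^2)$ converges to the boundary of the quadrant), so either add a uniform boundedness hypothesis or rework the step --- for instance by noting that Kuratowski convergence is a local property and comparing $E_\nu$ with $\operatorname{conv}(\partial E_\nu)$ inside large balls. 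With these caveats the argument is sound and consistent with the paper's (implicit) proof.
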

\begin{rem}
Of course, this type of result necessarily requires at least a convexity assumption. Indeed, if $K$ is the unit disc in the plane, then  $\displaystyle\overline{K\setminus\frac{1}{\nu} K}$ converges to $\overline{K}$ but the boundaries do not converge to the boundary of the limit.
\end{rem}

\section{Examples of application}
We end our paper with some simple examples of application. Let us start with an economical one that illustrates Theorem \ref{N}. 
\begin{ex}
A factory produces $n$ articles that are sold at prices $c_1,\dots, c_n$ per unit. Of course, the prices are subject to some variations. We denote by $a_ij$ the coefficient encoding how much of the $j$-th raw material is used to produce the $i$-th article. Let $\beta_j$ be an upper bound for the stock of the $j$-th raw material. 

As is well-known, in order to maximize the profit we have to solve a linear programming problem given by 
$$
\begin{cases}c^Tx\to \max\\
[a_{ij}]x\leq \beta\\
x\geq 0,
\end{cases}
$$
where $c=(c_1,\dots, c_n)$ is the cost vector and $x=(x_1,\dots,x_n)$ gives the number of articles produced.By passing to the dual problem, we may rewrite this as $-c^Tx\to \min$. Here 
$$A=\left[\begin{array}{ccc}a_{11}&\dots &a_{1n}\\ 
\dots &\ddots &\dots\\
a_{m1} &\dots &a_{mn}\\
-1&\dots & 0\\
\dots &\ddots &\dots\\
0&\dots &-1\end{array}\right]$$
and $b=(\beta, 0,\dots, 0)^T$.

Ifi $\bar{x}$ denotes an optimal point, it implies a certain regulation of the machines in the factory. Now, Theorem \ref{N} tells us that this regulation is optimal (gives a maximal profit) as long as the prices represented by the cost vector $c$ do not leave the cone $N_a(E)$ for an appropriate choice of the vertex $a$ in the feasible set $E$. Note that computing the cones from initial data is an easy task. 
\end{ex}

\begin{ex}\label{Example 2.2}    
Consider the producer's system \cite{Debreu}, where the production set $Y$ is given by constraints $Ay\leq b$ with the properties $Y^*\neq \emptyset$ and $Y\cap(-Y)\subset\{0\}$. The goal is to maximize the producer's profit $p^Ty$. Therefore taking $f(y)=-p^Ty$ we look for minimum over the set $Y$. By theorem 2.4 the optimal production plan is at some $y^*\in Y^*$ and the optimal price is the one satisfying $p\in N_{y^*}(Y)$.

In the next picture we present this example in two-dimensional space of goods. Pay attention that we do not have any returns to scale, i.e. in contrast to \cite{Debreu} we waive the assumption about the convexity of production set $Y$. Additionally, the constants below satisfy $b>a>0$, $b>1$(\footnote{This assumption is only technical. The fact that $a,b>0$ implies that in the considered example the production set meets the standard economic expectations. Thanks to the fact that $b>a$ and $b>1$ it is possible to determine the optimal production plan}).

$$\begin{array}{rcl}
Y=Y_1\cup Y_2&=&
\{(y_1,y_2)\in\mathbb{R}^2: y_2\leq a \wedge y_2\leq -\frac 12 y_1 \wedge y_2\leq -2y_1 \}\cup\\
&&\{(y_1,y_2)\in\mathbb{R}^2:  y_2\leq -b \wedge y_2\leq -2y_1+2\}
\end{array}
$$
Then $Y^*=\{ (-2a,a), (0,0), (b,-2b), (b+1,-2b) \}$ and:
$$
\begin{array}{l}
N_{(-2a,a)}(Y)=\{(p_1,p_2)\in\mathbb{R}^2:
p_2\geq 0 \wedge p_2\geq 2p_1
\}\\

N_{(0,0)}(Y)=\{(p_1,p_2)\in\mathbb{R}^2:
2p_1\geq p_2 \geq \frac 12 p_1
\}\\

N_{(b,-2b)}(Y)=\emptyset\\

N_{(b+1,-2b)}(Y)=\{ (p_1,p_2)\in\mathbb{R}^2:
p_2\geq 0 \wedge p_2\geq \frac 12 p_1
\}
\end{array}
$$

For the prices from the corresponding cones the profit from production is:

$$
\begin{array}{l}
\pi_{(-2a,a)}(p_1,p_2)=-2ap_1+ap_2=:\pi_1\\
\pi_{(0,0)}(p_1,p_2)=0\\
\pi_{(b+1,-2b)}(p_1,p_2)=(b+1)p_1-2b p_2=:\pi_2\\
\end{array}
$$
Moreover, the constraint $b>1$ implies $\pi_1>\pi_2$. Therefore the optimal producion plan is  $y^*=\{(-2a,a) \}$ giving the maximal profit $\pi^*=\pi_1$.
\end{ex}

\subsection{Kuratowski convergence and LP problem}

\begin{ex}\label{Example 3.1}
Continuing the \ref{Example 2.2}, consider the producer's system, in which the producer is introducing some innovations. The innovations may be understood as the employment of some new technologies into the production process, rearrengement of the existing production process in the way that increases production abilities, etc. All of them result in extension of the set of possible production plans, denoted as $Y_\nu$. We naturally ask about the influence of those changes on the optimal plans. When can we assure that realisation of a current producer's optima leads to the optimal production in the final set $Y$? 
The positive answer is given by Theorem \ref{main}, provided the sets $Y_\nu$ converge to the set $Y$ in Kuratowski sense.

To illustrate the example let's consider again the following numerical example in two-dimensional space of goods. As before, the constants below satisfy $b>a>0$, $b>1$.

$$\begin{array}{rcl}
Y_\nu=Y_{1,\nu}\cup Y_{2,\nu}&=&
\{(y_1,y_2)\in\mathbb{R}^2: y_2\leq a \wedge y_2\leq -\frac \nu 2 y_1 \wedge y_2\leq -\frac 2\nu y_1 \}\cup\\
&&\{(y_1,y_2)\in\mathbb{R}^2:  y_2\leq -b \wedge y_2\leq -\frac 2\nu y_1+2\}
\end{array}
$$

Then the Kuratowski limit of the sequence $(Y_\nu)$ when $\nu\nearrow 1$ is the set $Y$ defined in the example 2.2. Moreover, the sequence $(Y_\nu)$ is ascending, i.e. for $\mu>\nu$ it holds $Y_\nu\subset Y_\mu$. This represents the described expansion of production set.

The candidates for $\nu$-optimal production plans are 
$Y^*_\nu=\{ (-\frac{2a}{\nu},a),\ (0,0),\ (\nu b,-2b),\linebreak
(\nu b+1,-2b) \}$, while the corresponding normal cones are:
$$
\begin{array}{l}
N_{(-\frac{2a}{\nu},a)}(Y)=\{(p_1,p_2)\in\mathbb{R}^2:
p_2\geq 0 \wedge p_2\geq \frac{2}{\nu} p_1
\}\\

N_{(0,0)}(Y)=\{(p_1,p_2)\in\mathbb{R}^2:
\frac{2}{\nu} p_1\geq p_2 \geq \frac{\nu}{2} p_1
\}\\

N_{(\nu b,-2b)}(Y)=\emptyset\\

N_{(\nu b+1,-2b)}(Y)=\{ (p_1,p_2)\in\mathbb{R}^2:
p_2\geq 0 \wedge p_2\geq \frac \nu 2 p_1
\}
\end{array}
$$

The profits generated by the production plans and price vectors from corresponding normal cones are:

$$
\begin{array}{l}
\pi_{(-\frac{2a}{\nu},a)}(p_1,p_2)=-\frac{2a}{\nu} p_1+ap_2=:\pi_{1,\nu}\\
\pi_{(0,0)}(p_1,p_2)=0\\
\pi_{(\nu b+1,-2b)}(p_1,p_2)=(\nu b+1)p_1-2b p_2=:\pi_{2,\nu}\\
\end{array}
$$

By similar arguments as before $y^*_\nu=(-\frac{2a}{\nu},a)$ and $\pi^*_\nu=\pi_{1,\nu}$. Clearly, $\lim\limits_{\nu\nearrow 1} y^*_\nu=y^*$ and $\lim\limits_{\nu\nearrow 1}\pi^*_\nu=\pi^*$.\\
\end{ex}

Now we present an example in which the number of vertices is reduced in the limit passing.

\begin{ex}\label{Example 3.2}
Consider the descending sequence of production sets:
$$Y_\nu=\{(y_1,y_2)\in\mathbb{R}^2: y_2\leq a\ \wedge\ y_2\leq -\nu y_1-2a\ \wedge\ y_2\leq -\frac 1\nu y_1-\frac{(\nu+1)^2}{\nu}\cdot a\ \wedge\ p_1\leq 0
\}$$
with $\nu\nearrow 1$. Then for any $1>\nu>0$ the set of veritices is 
$$Y^*_\nu=\left\{(-\frac{3a}{\nu},a), (-a,-(2+\nu)a), \left(0,-\frac{(\nu+1)^2}{\nu}\cdot a \right) \right\}.$$
The Kuratowski limit of the sequence $(Y_\nu)$ when $\nu\nearrow 1$ is the set
$$Y=\{ (y_1,y_2)\in\mathbb{R}^2: y_2\leq a\ \wedge\ y_2\leq -y_1-2a\ \wedge\ p_1\leq 0 \},$$
for which $Y^*=\{ (-3a,a), (0,-4a) \}.$ Clearly, for any $\nu\in(0,1)$ the optimal plans are $y^*_\nu=(-\frac{3a}{\nu},a)$, which converges to the optimal production plan in the limiting set $y^*=(-3a,a)\in Y$. 
\end{ex}

\bigskip
\noindent{\small\textsc{Addresses:}}\\{\tiny
(A.D. \and M. K.)\hfill (M.D.)\\
Cracow University of Economics\hfill Jagiellonian University\\
Department of Mathematics\hfill Faculty of Mathematics and Computer Science\\
 Rakowicka 27\hfill Institute of Mathematics\\
 31-510 Cracow, Poland\hfill\L ojasiewicza 6\\
 {\tt anna.denkowska@uek.krakow.pl} \hfill 30-348 Krak\'ow, Poland\\
{\tt marta.kornafel@uek.krakow.pl}\hfill
{\tt maciej.denkowski@uj.edu.pl}
}


\begin{thebibliography}{DGLL}
\bibitem{ChZ} E. Chong, S. \.Zak, {\it An Instroduction to Optimization}, Wiley Eds 2004;

\bibitem{BS} C. Bergthaller, I. Singer, {\it The distance to a polyhedron}, Linear Alg. Appl. 169 (1992), 111-129;

\bibitem{DM} G. Dal Maso, {\it Introduction to $\Gamma$-convergence}, Birkh\"auser 1991;


\bibitem{Debreu} G. Debreu, {\it Theory of value}, Yale University Press 1959;

\bibitem{DD} Z. Denkowska, M. Denkowski, {\it The Kuratowski convergence and connected components}, J. Math. Anal. Appl. 387 (2012), 48-65;

\bibitem{DGLL} A. Daniilidis, M. Goberna, M. Lopeza, R. Luchetti, {\it Lower semicontinuity of the feasible set mapping of linear systems relative to their domains}, preprint 2014

\bibitem{LM} P.-J. Laurent, B. Martinet, {\it M\'ethodes duales pour le calcul du minimum d'une fonction convexe sur une intersection de convexes} in {\it Symposium on Optimization, Nice 1969}, Lect. Notes in Math 132, Springer-Verlag, New York 1970, 159-180;

\bibitem{R} R. T. Rockafellar, R.Wets, {\it Variational Analysis}, Springer Verlag 1998;

\bibitem{F} J. Franklin, {\it Methods of Mathematical Economics}, Springer Verlag 1980.
\end{thebibliography}
\end{document}